\newcommand{\N}{\mathbb{N}}
\newcommand{\Z}{\mathbb{Z}}
\newcommand{\R}{\mathbb{R}}
\newcommand{\C}{\mathbb{C}}
\newcommand{\T}{\mathbb{T}}
\newcommand{\DT}{\mathcal{D}'(\T)}
\newcommand{\MT}{\mathcal{M}(\T)}
\newcommand{\A}[1]{A^{#1}(\mathbb{T})}
\newcommand{\dd}{~\mathrm{d}}
\newcommand{\supp}{\mathrm{supp}}
\newcommand{\PT}{\mathcal{P}(\T)}
\newcommand{\cZ}{{\mathcal{Z}}}
\DeclareMathOperator{\dist}{dist}
\title[Cyclicity in weighted $\ell^p$ spaces]{Cyclicity in weighted $\ell^p$ spaces}
\author[F. Le Manach]{Florian Le Manach}
\address{IMB\\Universit\'e de  Bordeaux \\
351 cours de la Lib\'eration\\33405 Talence \\France}
\email{florian.le-manach@math.u-bordeaux.fr}
\keywords{Cyclicity, weighted $\ell^p$ spaces,  capacity} 
\subjclass[2000]{primary 43A15; secondary 28A12, 42A38.}
\newtheorem{Def}{Definition}[section]
\newtheorem{lemme}[Def]{Lemma}
\newtheorem{prop}[Def]{Proposition}
\newtheorem{theo}[Def]{Theorem}
\newtheorem*{theoA}{Theorem A}
\newtheorem*{theoB}{Theorem B}
\numberwithin{equation}{section}
\begin{document}

\begin{abstract} 
We study the cyclicity in weighted $\ell^p(\Z)$ spaces.  For $p \geq 1$ and $\beta \geq 0$,  let 
$\ell^p_\beta(\Z)$ be the space of sequences $u=(u_n)_{n\in \Z}$ such that 
 $(u_n |n|^{\beta})\in \ell^p(\Z) $. We obtain both necessary conditions and sufficient conditions for $u$ to be cyclic in  $\ell^p_\beta(\Z)$,  in other
words, for $ \{(u_{n+k})_{n \in \Z},~ k \in \Z \}$  to span a dense subspace of $\ell^p_\beta(\Z)$.
 The conditions are given in terms of the Hausdorff dimension and the capacity of the zero set of the Fourier transform  of $u$.
\end{abstract}

\maketitle

\section{Introduction and main results}

For $p \geq 1$ and $\beta \in \R$, we define the Banach space 
$$\ell^p_\beta(\Z) = \left\{ u=(u_n)_{n\in \Z} \in \C^\Z, \;\; 
 \|u\|_{\ell^p_\beta}^p = \sum_{n\in \Z} |u_n|^p (1+|n|)^{p\beta} < \infty \right\}$$
endowed with the norm $\|\cdot\|_{\ell^p_\beta}$. Notice that $\ell^p_0(\Z)$ is the classical $\ell^p(\Z)$ space.

\bigskip
In this work, we are going to investigate cyclic vectors for $\ell^p_\beta(\Z)$ when $\beta \geq 0$. A vector $u \in \ell^p_\beta(\Z)$ is called {\it cyclic} in $\ell^p_\beta(\Z)$ if the linear span of $ \{(u_{n+k})_{n \in \Z},~ k \in \Z \}$ is dense in $\ell^p_\beta(\Z)$.

\bigskip
We denote by $\T$ the circle $\R / 2\pi \Z$. The Fourier transform of $u \in \ell^p(\Z)$ is given by
$$\widehat{u} : t \in \T \mapsto \sum_{n\in \Z}u_n e^{int}$$
and when $\widehat{u}$ is continuous, we denote by $\cZ(\widehat{u})$ the zero set on $\T$ of $\widehat{u}$:
$$\cZ(\widehat{u})=\{t \in \T,~ \widehat{u}(t)=0\}.$$

\bigskip
The case $\beta=0$ was already studied by Wiener, Beurling, Salem and Newman. 
When $p=1$ or $p=2$, Wiener characterized the cyclic vectors $u$ in $\ell^p(\Z)$ by the zeros of $\widehat{u}$, with the following theorem. 

\begin{theo}[\cite{WIE}] \label{WienerTh}
Let $u \in \ell^p(\Z)$.
\begin{enumerate}
\item If $p=1$ then $u$ is cyclic in $\ell^1(\Z)$ if and only if $\widehat{u}$ has no zeros on $\T$.
\item If $p=2$ then $u$ is cyclic in $\ell^2(\Z)$ if and only if $\widehat{u}$ is non-zero almost everywhere.
\end{enumerate}
\end{theo}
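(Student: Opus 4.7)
The plan is to transport the cyclicity problem to the Fourier side. The Fourier transform is an isometric isomorphism from $\ell^1(\Z)$ onto the Wiener algebra $A(\T)$ of absolutely convergent Fourier series, and from $\ell^2(\Z)$ onto $L^2(\T)$. Under this isomorphism the shift $(u_n)\mapsto(u_{n+1})$ becomes multiplication by $e^{-it}$, so the closed linear span of $\{(u_{n+k})_{n\in\Z}:k\in\Z\}$ corresponds to the closure of $\{\widehat{u}\cdot P:P\text{ a trigonometric polynomial}\}$ in $A(\T)$, respectively in $L^2(\T)$. Cyclicity of $u$ thus becomes the question of whether this closure is the whole target space.

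For $p=1$, since trigonometric polynomials are dense in $A(\T)$ and $A(\T)$ is a Banach algebra, the closure in question is exactly the closed ideal $I(\widehat{u})$ of $A(\T)$ generated by $\widehat{u}$. Necessity of the zero-free condition is immediate: if $\widehat{u}(t_0)=0$, then by continuity of point evaluation every element of $I(\widehat{u})$ vanishes at $t_0$, so $I(\widehat{u})$ is a proper ideal. For sufficiency I would invoke Wiener's lemma, which asserts that whenever $f\in A(\T)$ has no zeros one has $1/f\in A(\T)$. Approximating $1/\widehat{u}$ by its Fourier partial sums $P_n$ in the $A(\T)$-norm and using the Banach-algebra inequality gives $\widehat{u}\,P_n\to 1$ in $A(\T)$, whence $1\in I(\widehat{u})$ and therefore $I(\widehat{u})=A(\T)$.

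For $p=2$, let $M$ denote the closed shift-invariant subspace of $L^2(\T)$ generated by $\widehat{u}$. Necessity is straightforward: if $\widehat{u}$ vanishes on a set $E$ of positive measure, then every trigonometric multiple of $\widehat{u}$ vanishes on $E$, and this property persists under $L^2$-limits after extracting an a.e.-convergent subsequence; hence the characteristic function $\mathbf{1}_E$ lies outside $M$. For sufficiency, suppose $\widehat{u}\neq 0$ almost everywhere and take $f\in L^2(\T)$ orthogonal to $e^{ikt}\widehat{u}$ for every $k\in\Z$. Then $f\overline{\widehat{u}}$, which lies in $L^1(\T)$ by Cauchy--Schwarz, has all its Fourier coefficients equal to zero and therefore vanishes almost everywhere; since $\widehat{u}\neq 0$ a.e. we conclude $f=0$, so $M^\perp=\{0\}$ and $M=L^2(\T)$.

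The main obstacle is Wiener's lemma, which powers the sufficient half of (1) and is not elementary: it is the precise place where the algebra structure of $A(\T)$ must be used, and is most transparently proved via Gelfand theory (identifying the maximal ideal space of $A(\T)$ with $\T$). I would simply cite it from a standard reference. Everything else reduces to routine density and limit arguments once the Fourier transform has reframed the problem.
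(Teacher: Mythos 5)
Your proof is correct. Note that the paper does not prove this statement at all---it is quoted from Wiener's 1932 paper as classical background---so there is no internal argument to compare against; your write-up is the standard one and every step checks out. In particular, the point that the Fourier partial sums $P_n$ of $1/\widehat{u}$ converge in the $A(\T)$-norm is legitimate precisely because that norm is the $\ell^1$-norm of the coefficients, so the tail sums vanish; and the $p=2$ duality argument ($f\perp e^{ikt}\widehat{u}$ for all $k$ forces $f\overline{\widehat{u}}=0$ a.e.) is exactly the mechanism the paper later abstracts in Proposition \ref{thcycl} via the pairing of $A^p_\beta(\T)$ with $A^q_{-\beta}(\T)$. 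Citing Wiener's $1/f$ lemma rather than reproving it is also consistent with the paper's own practice: the author invokes Newman's elementary proof of that lemma (the series $\varphi/f=\sum_{n\ge1}\varphi\,(P-f)^{n-1}/P^n$) in the proof of Lemma \ref{supportZeros}, which is a localized version of the same fact.
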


Lev and Olevskii showed that, for $1<p<2$ the problem of cyclicity in $\ell^p(\Z)$ is more complicated even for sequences in $\ell^1(\Z)$. 
The following Theorem  of Lev and Olevskii contradicts the Wiener conjecture.
\begin{theo}[\cite{LEV}]
If $1 < p < 2$, there exist $u$ and $v$ in $\ell^1(\Z)$ such that $\cZ(\widehat{u})=\cZ(\widehat{v})$, $u$ is not cyclic in $\ell^p(\Z)$, and $v$ is cyclic in $\ell^p(\Z)$.
\end{theo}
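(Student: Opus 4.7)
The natural approach is via duality. Since $(\ell^p(\Z))^{*} = \ell^{p'}(\Z)$ with $1/p + 1/p' = 1$, a vector $u \in \ell^1(\Z)$ fails to be cyclic in $\ell^p(\Z)$ precisely when some non-zero $v \in \ell^{p'}(\Z)$ annihilates every translate of $u$, which after reindexing reads $(u * \tilde{v})(k) = 0$ for all $k \in \Z$, where $\tilde{v}_n = \overline{v_{-n}}$. Taking Fourier transforms, this condition becomes $\widehat{u} \cdot \overline{\widehat{v}} = 0$ in $\mathcal{D}'(\T)$. Since $p' > 2$, the Fourier transform $\widehat{v}$ is in general a genuine distribution on $\T$ and may well be supported on a set of Lebesgue measure zero. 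Hence non-cyclicity of $u$ in $\ell^p(\Z)$ is equivalent to the existence of a non-zero distribution $T$ lying in the Fourier image of $\ell^{p'}(\Z)$ with $\supp T \subset \cZ(\widehat{u})$ and $\widehat{u} \cdot T = 0$ in $\mathcal{D}'(\T)$.

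The strategy is then to construct a single compact set $E \subset \T$ that carries a non-zero distribution $T_0$ in the Fourier image of $\ell^{p'}(\Z)$, and to choose $u, v \in \ell^1(\Z)$ with $\cZ(\widehat{u}) = \cZ(\widehat{v}) = E$ but with very different local behaviour at $E$. For $u$, arrange for $\widehat{u}$ to vanish to a sufficiently high (possibly infinite) order on $E$ so that $\widehat{u} \cdot T_0 = 0$ automatically, which forces $u$ to be non-cyclic. For $v$, force $\widehat{v}$ to vanish only transversally and of first order at every point of $E$, so that no non-zero distribution $T$ in the Fourier image of $\ell^{p'}(\Z)$ supported on $E$ can satisfy $\widehat{v} \cdot T = 0$; this would give cyclicity of $v$. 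A concrete realisation of $E$, $\widehat{u}$ and $\widehat{v}$ can be sought via Hadamard-lacunary or Riesz-product constructions with carefully chosen gap sequences and smoothing factors, so that the Fourier coefficient decay places both $u$ and $v$ in $\ell^1(\Z)$.

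The main technical obstacle is twofold. First, one must exhibit a compact $E$ thin enough to carry a non-zero $T_0$ in the Fourier image of $\ell^{p'}(\Z)$: this is a quantitative non-synthesis statement tuned to the exponent $p' > 2$, requiring delicate Fourier-coefficient estimates for distributions supported on $E$, in the spirit of Piatetski-Shapiro-type sets. Second, once $T_0$ and $u$ are built, one must verify that the companion distribution needed to disprove cyclicity of $v$ does not exist; the first-order vanishing of $\widehat{v}$ on $E$ reduces this to showing that $E$ carries no non-zero measure with Fourier coefficients in $\ell^{p'}(\Z)$, an arithmetic/combinatorial property of the lacunary set. Synchronizing these two constructions so that $u$ and $v$ share exactly the same zero set $E$, without disturbing the prescribed orders of vanishing, is where the bookkeeping is most delicate, and is the step I expect to be the real difficulty.
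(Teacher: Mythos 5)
First, a point of comparison: the paper does not prove this statement at all --- it is quoted from Lev and Olevskii \cite{LEV} --- and their argument is substantially deeper than what you outline. Your duality reduction is correct and matches the framework the paper uses elsewhere (Proposition \ref{thcycl} together with Lemma \ref{supportZeros}): $u$ fails to be cyclic in $\ell^p(\Z)$ if and only if some non-zero $T$ in the Fourier image of $\ell^{q}(\Z)$, $q=p/(p-1)$, satisfies $\widehat u\,T=0$, which in turn forces $\supp T\subset\cZ(\widehat u)$. But from there your plan has two genuine gaps. The first is that the existence of a compact $E$ supporting a non-zero distribution $T_0$ with Fourier coefficients in $\ell^{q}$ while supporting no non-zero \emph{measure} with that property is not a side obstacle to be settled by ``Hadamard-lacunary or Riesz-product constructions with careful bookkeeping'': it is precisely the quantitative Piatetski-Shapiro phenomenon that constitutes the main theorem of \cite{LEV}. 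Your proposal therefore reduces the statement to a claim that is essentially the content of the statement itself, and supplies no construction.

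Second, and more seriously, the step ``force $\widehat v$ to vanish transversally to first order at every point of $E$, so that only measures supported on $E$ can annihilate $v$'' does not work as stated. Any $E$ carrying a non-zero $T_0$ as above is uncountable, hence contains a perfect subset, and at every accumulation point of $E$ a differentiable function vanishing on $E$ automatically has vanishing derivative there as well; so genuine first-order transversal vanishing at every point of $E$ is impossible for smooth $\widehat v$. For merely continuous $\widehat v\in A^1(\T)$ with, say, $|\widehat v(x)|\asymp\dist(x,E)$, the product $\widehat v\,T$ with a distribution $T$ is defined spectrally (by convolution of Fourier coefficients), so no local order-of-vanishing argument yields the implication ``$\widehat v\,T=0$ and $\supp T\subset E$ imply $T$ is a measure''. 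That implication is exactly where the difficulty of the theorem sits, and it is left unproved; Lev and Olevskii obtain the cyclic vector by a different route, exploiting structural properties of their set rather than the pointwise order of vanishing of $\widehat v$. The non-cyclic half of your plan (make $\widehat u$ vanish to high enough order so that $\widehat u\,T_0=0$) is sound in principle, but even there one must control the order of the distribution $T_0$ and construct $\widehat u\in A^1(\T)$ with $\cZ(\widehat u)=E$ exactly. As written, the proposal is a program, not a proof.
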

So we can't characterize the cyclicity of $u$ in $\ell^p(\Z)$ in terms of only $\cZ(\widehat{u})$, the zero set of $\widehat{u}$. However for $u \in \ell^1(\Z)$, Beurling, Salem and Newman gave both necessary conditions and sufficient conditions for $u$ to be cyclic in $\ell^p(\Z)$. These conditions rely on the "size" of the set $\cZ(\widehat{u})$ in term of it's $h$-measure, capacity and Hausdorff dimension.

\bigskip
Given $E \subset \T$ and $h$ a continuous function, non-decreasing and such that $h(0)=0$, we define the $h$-measure of E by
$$H_h(E) = \lim_{\delta \to 0} \inf \left\{ \sum_{i=0}^\infty h(|U_i|),~ E \subset \bigcup_{i=0}^\infty U_i,~ |U_i| \leq \delta \right\}$$
where the $U_i$ are open intervals of $\T$ and where $|U_i|$ denotes the length of $U_i$.\\
The Hausdorff dimension of a subset $E \subset \T$ is given by $$\dim(E) = \inf\{\alpha \in (0,1), H_\alpha(E)=0 \} = \sup\{\alpha \in (0,1), H_\alpha(E)=\infty \},$$ where $H_\alpha = H_h$ for $h(t)=t^\alpha$ (see \cite{KAH},  pp. 23-30).\\
Let $\mu$ be a positive measure on $\T$ and $\alpha \in [0,1)$. We define the $\alpha$-energy of $\mu$ by 
$$I_\alpha(\mu) = \sum_{n \geq 1} \frac{|\widehat{\mu}(n)|^2}{(1+|n|)^{1-\alpha}}.$$
The $\alpha$-capacity of a Borel set $E$ is given by
$$C_\alpha(E) = 1/{\inf \{ I_\alpha(\mu),~ \mu \in \mathcal{M}_\mathcal{P}(E) \} },$$
where $\mathcal{M}_\mathcal{P}(E)$ is the set of all probability measures on $\T$ which are supported on a compact subset of $E$. If $\alpha = 0$, $C_0$ is called the logarithmic capacity.\\
An important property which connects capacity and Hausdorff dimension is that (see \cite{KAH},  p. 34)
\small
\begin{equation} \label{PropCap}
\dim(E) = \inf\{\alpha \in (0,1), C_\alpha(E)=0 \} = \sup\{\alpha \in (0,1), C_\alpha(E)>0 \}.
\end{equation}
\normalsize

In the following theorem, we summarize the results of  Beurling \cite{BEU}, Salem  \cite{SAL} (see 	also \cite{KAH} pp. 106-110) and Newman  \cite{NEW}.   The H\"older conjugate of $p\neq 1$ is noted by $q = \frac{p}{p-1}$. 

\begin{theo}[\cite{BEU, NEW, SAL}]  \label{ThSansPoids}
Let $1 \leq p \leq 2$.
\begin{enumerate}
\item If $u \in \ell^1(\Z)$ and $\dim(\cZ(\widehat{u})) < {2}/{q}$ then $u$ is cyclic in $\ell^p(\Z)$.
\item For ${2}/{q} < \alpha \leq 1$, there exists $E \subset \T$ such that $\dim(E)=\alpha$ and every $u \in \ell^1(\Z)$ satisfying $\cZ(\widehat{u})=E$ is not cyclic in $\ell^p(\Z)$.
\item There exists $E \subset \T$ such that $\dim(E)=1$ and every $u \in \ell^1(\Z)$ satisfying $\cZ(\widehat{u})=E$ is cyclic in $\ell^p(\Z)$ for all $p>1$.
\end{enumerate}
\end{theo}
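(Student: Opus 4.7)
The plan is to reduce all three statements to one duality, then treat (1) by an energy--capacity computation and (2)--(3) by classical set constructions. By Hahn--Banach, $u\in\ell^p(\Z)$ fails to be cyclic iff there is a non-zero $v\in\ell^q(\Z)$ pairing to $0$ against every translate of $u$, equivalently $u\ast\check v=0$ where $\check v_n:=v_{-n}$. Fourier-transforming and letting $V$ denote the distribution on $\T$ with Fourier coefficients $(v_n)$, this is the relation $\widehat u\cdot V=0$. Since $u\in\ell^1(\Z)$ makes $\widehat u\in\CT$, the product vanishes precisely when $\supp V\subset E:=\cZ(\widehat u)$. So (1) amounts to showing that no non-zero distribution whose Fourier coefficients lie in $\ell^q$ is supported on a set of dimension strictly less than $2/q$; (2)--(3) then ask for specific sets $E$ that do or do not support such $V$.

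For (1), fix $\alpha$ with $\dim E<\alpha<2/q$. H\"older's inequality with exponents $q/2$ and $q/(q-2)$ gives
\begin{equation*}
\sum_{n\neq 0}\frac{|\widehat V(n)|^2}{(1+|n|)^{1-\alpha}}\leq \|v\|_{\ell^q}^2\Big(\sum_{n\neq 0}\frac{1}{(1+|n|)^{(1-\alpha)q/(q-2)}}\Big)^{(q-2)/q}<\infty,
\end{equation*}
the second sum converging precisely because $\alpha<2/q$. Thus $V$ has finite $\alpha$-energy, equivalently $V$ belongs to the Sobolev space $H^{(\alpha-1)/2}(\T)$. By \eqref{PropCap} one has $C_\alpha(E)=0$; the classical potential-theoretic principle that a distribution of finite $\alpha$-energy supported in a set of zero $\alpha$-capacity must vanish then forces $V=0$, and $u$ is cyclic.

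For (2), I would invoke the existence of \emph{Salem sets} of any prescribed Hausdorff dimension $\alpha\in(2/q,1]$: compact $E\subset\T$ carrying a probability measure $\mu$ with $\widehat\mu(n)=O(|n|^{-\alpha/2+\varepsilon})$ for every $\varepsilon>0$, built by the random Cantor construction of Salem and Kahane (\cite{KAH}). Since $\alpha>2/q$, this decay places $\widehat\mu$ in $\ell^q$, and the sequence $v_n:=\widehat\mu(n)$ then supplies the annihilating vector in the duality above, ensuring that every $u\in\ell^1(\Z)$ with $\cZ(\widehat u)=E$ fails to be cyclic.

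For (3), the task is to produce a compact $E\subset\T$ of Hausdorff dimension $1$ which is a \emph{set of uniqueness} for $\A{q}$ for every $q<\infty$, i.e.\ supports no non-zero distribution with $\ell^q$ Fourier coefficients. Following Newman, I would build a Cantor-type set by very slow dissection (ratios tending to $1$ fast enough that $\dim E=1$) while arranging the arithmetic geometry of the component intervals so that any distribution supported on $E$ has Fourier coefficients bounded below along a prescribed sparse subsequence, incompatible with $\ell^q$ decay for any finite $q$. By the duality above, every $u\in\ell^1(\Z)$ with $\cZ(\widehat u)=E$ is then cyclic in $\ell^p(\Z)$ for every $p>1$. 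I expect this last step--simultaneously achieving full dimension and uniqueness for every $q$--to be the main technical obstacle of the theorem.
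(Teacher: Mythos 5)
Your parts (1) and (2) are correct and follow the same route the paper uses for its weighted generalization: (1) is exactly the combination of the duality in Proposition \ref{thcycl}.(1) with the H\"older embedding $A^q(\T)\subset A^2_{(\alpha-1)/2}(\T)$ for $\alpha<2/q$ and the Kahane--Salem principle behind Lemma \ref{KahaneSupp}, and (2) is Salem's Theorem \ref{Salem} fed into Proposition \ref{thcycl}.(2). One imprecision worth fixing: the equivalence ``$\widehat u\cdot V=0$ precisely when $\supp V\subset\cZ(\widehat u)$'' is false in the direction you do not use (a distribution supported in $\cZ(\widehat u)$ need not be annihilated by $\widehat u$ --- this is the failure of spectral synthesis, e.g.\ $V=\delta_0'$), and the direction you do use requires a Wiener $1/f$ localization argument (Lemma \ref{supportZeros}), not just continuity of $\widehat u$. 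For measures, as in your part (2), the needed implication is immediate because a measure has order zero.

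Part (3) contains a genuine gap, and the route you sketch is not the one that works. You reduce (3) to producing a closed set of Hausdorff dimension $1$ supporting no non-zero distribution with Fourier coefficients in $\ell^q$ for any finite $q$. This is a sufficient condition, but it is much stronger than what is needed and you give no construction --- you yourself flag it as ``the main technical obstacle.'' Note the tension: a set of dimension $1$ has $C_\alpha(E)>0$ for every $\alpha<1$, hence carries probability measures of finite $\alpha$-energy for all $\alpha<1$, and ruling out \emph{every} $A^q$-distribution on such a set is a strong uniqueness property that ``slow dissection'' alone will not deliver. The classical proof (Newman's, reproduced in weighted form in Theorem \ref{ThNewman1} and Lemma \ref{dimHauss}) avoids this entirely: one does not show that $E$ supports no $A^q$-distribution, but exploits the special structure of the annihilating functional. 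Non-cyclicity produces $\phi\in L^2(\T)$ with $\phi'-1$ supported on $E$ and $(n\widehat\phi(n))\in\ell^q$; the $k$-fold convolution power $(\phi^{*k})^{(k)}+(-1)^k$ is then supported on the Minkowski sum $k\times E$ and, for $k\geq q/2$, lies in $A^2(\T)=L^2(\T)$. It therefore suffices to build a dimension-$1$ Cantor set (the set $S_0$ of Lemma \ref{dimHauss} with $\lambda=0$) all of whose self-sums $k\times E$ have Lebesgue measure zero, which forces $(\phi^{*k})^{(k)}=(-1)^{k-1}$ and contradicts $\widehat{(\phi^{*k})^{(k)}}(0)=0$. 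Without this convolution-power idea (or a genuine construction of a full-dimensional set of uniqueness for all $A^q$), your proof of (3) is incomplete.
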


In this paper we give a generalization of the results of Beurling, Salem and Newman to $\ell^p_\beta(\Z)$ spaces.\\
When $\beta q > 1$, we have an analogue of $(1)$ in Wiener's Theorem \ref{WienerTh}: a vector $u \in \ell^p_\beta(\Z)$ is cyclic if and only if $\widehat{u}$ has no zeros on $\T$. Indeed, $\ell^p_\beta(\Z)$ is a Banach algebra if and only if $\beta q > 1$ (see \cite{ZAR}).\\
When $p=2$, Richter, Ross and Sundberg gave a complete characterization of the cyclic vectors $u$ in the weighted harmonic Dirichlet spaces $\ell^2_\beta(\Z)$ by showing the following result:

\begin{theo}[\cite{RRS}]
Let $0<\beta \leq \frac{1}{2}$ and $u \in \ell^1_\beta(\Z)$ .\\
The vector $u$ is cyclic in $\ell^2_\beta(\Z)$ if and only if $C_{1-2\beta}(\cZ(\widehat{u}))=0$.
\end{theo}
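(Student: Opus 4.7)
The plan is to use Hilbert space duality to translate cyclicity of $u$ into a support condition on periodic distributions, and then to match this support condition against the definition of $(1-2\beta)$-capacity.

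The Hilbert space $\ell^2_\beta(\Z)$ has dual $\ell^2_{-\beta}(\Z)$ under the unweighted pairing $\langle u,v\rangle=\sum_n u_n\overline{v_n}$. By Hahn--Banach, $u$ fails to be cyclic iff there is a nonzero $v\in\ell^2_{-\beta}(\Z)$ orthogonal to every shift, i.e.\ $\sum_n u_{n+k}\overline{v_n}=0$ for all $k\in\Z$. Recognising this as the vanishing at every integer of a convolution, taking Fourier transforms yields $\widehat u\cdot T=0$ in $\DT$, where $T$ is the periodic distribution whose $n$-th Fourier coefficient is $\overline{v_{-n}}$. The weight condition on $v$ translates, via Parseval, into $\sum_n|\widehat T(n)|^2(1+|n|)^{-2\beta}<\infty$, i.e.\ $T$ lies in the negative-order Sobolev space $H^{-\beta}(\T)$. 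Since $u\in\ell^1_\beta(\Z)\subset\ell^1(\Z)$, the function $\widehat u$ is continuous, so $\widehat u\cdot T=0$ in $\DT$ is equivalent to $\supp(T)\subset\cZ(\widehat u)$. Thus non-cyclicity of $u$ is equivalent to the existence of a nonzero $T\in H^{-\beta}(\T)$ supported on $\cZ(\widehat u)$.

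For a positive Borel measure $\mu$ on $\T$, the $H^{-\beta}(\T)$ norm squared is, up to the $n=0$ term and Hermitian symmetry, exactly the $(1-2\beta)$-energy $I_{1-2\beta}(\mu)$ used in the paper's definition of capacity. This immediately gives the necessity direction: if $C_{1-2\beta}(\cZ(\widehat u))>0$, a probability measure $\mu$ in $\mathcal{M}_{\mathcal P}(\cZ(\widehat u))$ with finite $(1-2\beta)$-energy provides a nonzero $T\in H^{-\beta}(\T)$ supported on $\cZ(\widehat u)$, and Fourier inversion delivers a $v\in\ell^2_{-\beta}(\Z)$ orthogonal to all shifts of $u$, so $u$ is not cyclic.

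For sufficiency one assumes $C_{1-2\beta}(\cZ(\widehat u))=0$ and shows that no nonzero $T\in H^{-\beta}(\T)$ is supported on $\cZ(\widehat u)$. This is the classical removable-set theorem from potential theory: sets of zero $(1-2\beta)$-capacity are \emph{polar} for $H^{-\beta}(\T)$. I would derive it via the quasi-continuous representative theory for $H^\beta(\T)$: a capacity-zero compact set $E$ meets $H^\beta$-functions only on a quasi-null set, so smooth functions vanishing in some neighbourhood of $E$ are dense in the subspace of $H^\beta(\T)$ of functions vanishing quasi-everywhere on $E$; any $T\in H^{-\beta}(\T)$ with $\supp(T)\subset E$ annihilates a dense subset of $H^\beta(\T)$ and must therefore be zero.

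The technical heart of the argument is precisely this last removability step. For positive measures the energy--capacity correspondence is tautological, but extending it to arbitrary complex distributions requires the Bessel-capacity and quasi-continuity machinery (\`a la Adams--Hedberg) or a carefully tailored construction of cut-off kernels adapted to the $(1-2\beta)$-capacity hypothesis. In a self-contained exposition this is where the bulk of the work would lie; with this ingredient in hand, combining it with Steps~1 and~2 above yields the stated equivalence.
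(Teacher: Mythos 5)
The paper does not actually prove this statement; it is quoted from Richter--Ross--Sundberg \cite{RRS}, so there is no internal proof to compare against. Your duality framework is nevertheless the right one, and it coincides with the machinery the paper builds in Section 2 (Lemma \ref{supportZeros}, Proposition \ref{thcycl}): non-cyclicity of $u$ in $\ell^2_\beta(\Z)$ is equivalent to the existence of a nonzero $T\in A^2_{-\beta}(\T)$ with $\supp(T)\subset\cZ(\widehat u)$, and the direction ``positive capacity $\Rightarrow$ not cyclic'' is handled correctly and completely by testing against a measure of finite $(1-2\beta)$-energy, exactly as in the proof of Theorem \ref{ThAA}.(2).

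Two points need repair or substantial additional work. First, your claim that, because $\widehat u$ is continuous, $\widehat u\cdot T=0$ is \emph{equivalent} to $\supp(T)\subset\cZ(\widehat u)$ is not right as stated. The implication $\widehat u\,T=0\Rightarrow\supp(T)\subset\cZ(\widehat u)$ is true but requires the Wiener--Newman $1/f$ argument (one must invert $\widehat u$ locally inside $A^1_\beta(\T)$, as in the proof of Lemma \ref{supportZeros}); continuity alone does not suffice to divide a distribution by $\widehat u$. The converse implication is false for general distributions (take $T=\delta_0'$ and $f(t)=1-e^{it}$), although you only ever apply it to measures, where it is harmless. Second --- and this is the real gap --- the entire content of the hard direction ``$C_{1-2\beta}(\cZ(\widehat u))=0\Rightarrow u$ cyclic'' is the removability statement that a compact set of zero $(1-2\beta)$-capacity supports no nonzero element of $A^2_{-\beta}(\T)$. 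You name this step but do not prove it, and it cannot be extracted from the paper's Lemma \ref{KahaneSupp}, which covers only the strictly subcritical exponents $\alpha<\frac{2}{q}(1-\beta q)=1-2\beta$; the critical exponent is precisely where Deny-type spectral synthesis (quasi-continuous representatives, density of test functions supported off a polar set) must be invoked and verified for this Fourier-defined energy. As written, the proposal is a correct reduction of the theorem plus a citation of its hardest ingredient, not a self-contained proof.
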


Our first main result is the following theorem.

\begin{theoA}
Let $1 < p < 2$, $\beta > 0$ such that $\beta q \leq 1$. 
\begin{enumerate}
\item If  $u \in \ell^1_\beta(\Z)$ and $\dim(\cZ(\widehat{u})) < \frac{2}{q}(1 - \beta q)$ then $u$ is cyclic in $\ell^p_\beta(\Z)$.
\item If  $u \in \ell^1_\beta(\Z)$ and $\dim(\cZ(\widehat{u})) > 1-\beta q$ then $u$ is not cyclic in $\ell^p_\beta(\Z)$.
\item For $\frac{2}{q}(1 - \beta q) < \alpha \leq 1$, there exists a closed subset $E \subset \T$ such that $\dim(E)=\alpha$ and every $u \in \ell^1_\beta(\Z)$ satisfying $\cZ(\widehat{u})=E$ is not cyclic in $\ell^p_\beta(\Z)$.
\item If $p = \frac{2k}{2k-1}$ for some $k \in \N^*$ there exists a closed subset $E \subset \T$ such that $\dim(E) = 1 - \beta q$ and every $u \in \ell^1_\beta(\Z)$ satisfying $\cZ(\widehat{u})=E$ is cyclic in  $\ell^p_\beta(\Z)$.
\end{enumerate}
\end{theoA}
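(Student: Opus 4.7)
The plan is to reduce everything to the Fourier side via duality. The pairing $\langle u,v\rangle=\sum_n u_n\overline{v_n}$ identifies $(\ell^p_\beta(\Z))^*\cong\ell^q_{-\beta}(\Z)$, so by Hahn--Banach $u$ is cyclic if and only if there is no non-zero $v\in\ell^q_{-\beta}(\Z)$ with $\sum_n u_{n+k}\overline{v_n}=0$ for every $k\in\Z$. On the Fourier side this annihilation reads $\widehat u\cdot\widehat v=0$ in $\DT$, which, since $\widehat u\in\CT$, forces $\supp\widehat v\subset\cZ(\widehat u)$. Every part of Theorem~A thus reduces to whether a non-zero distribution with Fourier coefficients in $\ell^q_{-\beta}(\Z)$ can be supported on the prescribed set.

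Parts (2) and (3) produce the annihilator explicitly. For (2), combining the hypothesis with \eqref{PropCap} gives $C_{1-\beta q}(\cZ(\widehat u))>0$, so Frostman's theorem yields a probability measure $\mu$ on $\cZ(\widehat u)$ with $I_{1-\beta q}(\mu)<\infty$; since $q\geq 2$ and $|\widehat\mu|\leq 1$, the pointwise bound $|\widehat\mu(n)|^q\leq|\widehat\mu(n)|^2$ shows $(\widehat\mu(n))_n\in\ell^q_{-\beta}(\Z)$, and $\mu$ annihilates every translate of $u$. Part (3) runs the same scheme on a Salem set $E$ of dimension $\alpha$: such $E$ carries a probability measure with $|\widehat\mu(n)|=O(|n|^{-\alpha/2+\varepsilon})$ for every $\varepsilon>0$, and the hypothesis $\alpha>2(1-\beta q)/q$ is exactly what is needed to put $\widehat\mu\in\ell^q_{-\beta}(\Z)$ for some small $\varepsilon$; the same $\mu$ then witnesses non-cyclicity of every $u\in\ell^1_\beta(\Z)$ with $\cZ(\widehat u)=E$.

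Part (1) is the principal technical step. Arguing by contradiction, I extract from a hypothetical non-cyclic $u$ the witness $v$ and then construct, via a Beurling--Salem-type synthesis argument, multipliers $\psi_n$ vanishing in a neighborhood of $E:=\cZ(\widehat u)$ such that $\psi_n\widehat u\to\widehat u$ in the Fourier image of $\ell^p_\beta(\Z)$. Since off $E$ the function $\widehat u$ is bounded below, each $\psi_n\widehat u$ lies in the closed translation-invariant subspace generated by $u$ (a Wiener-type invertibility argument on the complement of $E$), and the approximation then forces cyclicity, contradicting the existence of $v$. The $\psi_n$ are built from a Whitney decomposition of $\T\setminus E$ combined with smooth bumps; the quantitative core is a weighted Bernstein-type inequality estimating the $\A{\beta}$-norm of a bump supported on an arc of length $\ell$ by a suitable power of $\ell$, and summing these estimates against a cover of $E$ converts the Hausdorff-dimension bound into the exponent $2(1-\beta q)/q$. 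Producing this Bernstein estimate in the weighted setting is the chief obstacle of the proof.

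Part (4) exploits that $q=2k$ is an even integer. Writing $b_n:=v_n^k$, the condition $v\in\ell^{2k}_{-\beta}(\Z)$ becomes $\sum|b_n|^2(1+|n|)^{-2k\beta}<\infty$, i.e., the distribution with Fourier coefficients $b$ lies in a negative Sobolev space of order $-k\beta$. This Parseval-type reformulation converts the existence of an annihilator supported on $E$ into a $(1-\beta q)$-capacity condition on $E$, generalizing the Richter--Ross--Sundberg characterization at $k=1$. I would then invoke a classical generalized-Cantor-set construction producing a closed $E\subset\T$ with $\dim E=1-\beta q$ but $C_{1-\beta q}(E)=0$ (which exists by \eqref{PropCap}); the vanishing of this critical capacity forces every admissible annihilator to be zero, so every $u\in\ell^1_\beta(\Z)$ with $\cZ(\widehat u)=E$ is cyclic.
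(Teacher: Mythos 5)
Parts (2) and (3) of your proposal are correct and essentially identical to the paper's argument (a Frostman measure of finite $(1-\beta q)$-energy, respectively a Salem measure, fed into the duality criterion of Proposition \ref{thcycl}(2)). The difficulties are in (1) and (4).

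For (1), the paper's proof is a short duality argument: $\dim(\cZ(\widehat u))<\frac2q(1-\beta q)$ gives $C_\alpha(\cZ(\widehat u))=0$ for some $\alpha<\frac2q(1-\beta q)$, and Lemma \ref{KahaneSupp} then shows that \emph{no} nonzero element of $A^q_{-\beta}(\T)$ can be supported on $\cZ(\widehat u)$, so the annihilator you extracted cannot exist. You instead propose a constructive multiplier scheme, and the step you yourself flag as the ``chief obstacle'' is not just hard, it fails under the stated hypothesis. If $\psi$ is built from bumps on a cover of $E$ by $n$ arcs of length $\ell$, the interpolation inequality (Lemma \ref{lemme3Newman}) bounds $\|1-\psi\|_{A^p_\beta(\T)}$ by a constant times $\bigl(n\ell^{\alpha}\bigr)^{1/2}$ with $\alpha=\frac2q(1-\beta q)$; what must tend to $0$ is thus an $\alpha$-dimensional Minkowski-type content, not a Hausdorff sum $\sum|U_i|^{\alpha}$ (very short arcs in the cover make $\|\psi'\|_{A^2(\T)}$ blow up). Since Hausdorff dimension can be strictly smaller than box dimension (a convergent sequence has Hausdorff dimension $0$ and upper box dimension arbitrarily close to $1$), a cover witnessing the dimension bound does not ``convert into the exponent $\frac2q(1-\beta q)$''. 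This is precisely why the paper reserves the constructive route for Theorem B(1), under the strictly stronger strong-$\alpha$-measure hypothesis $r_n n^{1/\alpha-1}\to0$, and proves A(1) by capacity. (Your contradiction framing is also idle: the witness $v$ is never used in your construction.)

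For (4) there is a genuine mathematical error. The distribution whose Fourier coefficients are the pointwise powers $v_n^k$ is, up to constants, the $k$-fold convolution power of $v$; its support lies in the sumset $k\times E=E+\cdots+E$, not in $E$. So the capacity condition your reduction actually produces is $C_{1-2k\beta}(k\times E)=0$, and the entire difficulty of the paper's proof of (4) lives exactly there: one must construct a closed set $E$ with $\dim E=1-\beta q$ whose $k$-fold \emph{sumset} still has zero capacity at the critical exponent. A generic Cantor set of dimension $d$ has sumset of dimension roughly $\min(kd,1)$, with positive $(1-\beta q)$-capacity, and the argument collapses; the paper's Lemma \ref{dimHauss} builds the special lacunary-digit sets $S_\lambda$ with $k\times S_\lambda\subset S_\lambda^k$ and computes $C_\alpha(S_\lambda^k)$ precisely to overcome this. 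A further omission: the annihilating object is not a sequence $v$ on which one can directly take powers, but comes from Newman's representation $L(g)=\int_\T(g'\phi+g)$ with $\phi\in L^2(\T)$; the distribution supported on $k\times\cZ(\widehat u)$ is $(\phi^{*k})^{(k)}+(-1)^k$ (Theorem \ref{ThNewman1}), and the contradiction is that its zeroth Fourier coefficient forces it to be a nonzero constant. None of this machinery appears in your sketch.
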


Note that in order to prove $(2)$ and $(4)$ we show a stronger result (see Theorem \ref{ThAA}).

We can summarize Theorem A by the following diagram:

\bigskip

\begin{center}
\begin{tikzpicture}[scale=0.9]
\draw (0,0) -- (10.5,0);
\draw (0,0) node[left] {$\dim(\cZ(\widehat{u}))~~~$};
\draw (1,0) node[below,yshift=-0.3cm] {$0$} node {$|$};
\draw (3,0) node[below,yshift=-0.3cm] {$\frac{2}{q}(1 - \beta q)$} node {$|$};
\draw (7,0) node[below,yshift=-0.3cm] {$1-\beta q$} node {$|$};
\draw (9.5,0) node[below,yshift=-0.3cm] {$1$} node {$|$};
\draw[decorate,decoration={brace,raise=0.5cm,amplitude=7pt}] (1.1,0) -- ++(1.8,0) node[midway,above,yshift=0.75cm] {\textit{(1)}};
\draw[decorate,decoration={brace,raise=0.5cm,amplitude=7pt}] (3.1,0) -- ++(3.8,0) node[midway,above,yshift=0.75cm] {\textit{(3) and (4)}};
\draw[decorate,decoration={brace,raise=0.5cm,amplitude=7pt}] (7.1,0) -- ++(2.3,0) node[midway,above,yshift=0.75cm] {\textit{(2)}};
\end{tikzpicture}
\end{center}

\bigskip
\sloppypar{The fourth propriety shows that the bound $1 - \beta q$ obtained in $(2)$ is optimal in the sense that there is no cyclic vector such that $\dim(\cZ(\widehat{u})) > 1 - q \beta$, and, we can find some cyclic vector $u$ with $\dim(\cZ(\widehat{u})) = 1 - \beta q$. However this is only proved if $p=\frac{2k}{2k-1}$ for some positive integer $k$. When $p$ is not of this form, for all positive integer $k$,  we still prove similar results but we loose the optimality because we fail to reach the bound $1 - \beta q$.}

\bigskip
The "equality case" $\dim(\cZ(\widehat{u})) = \frac{2}{q}(1 - \beta q)$ is not treated by the previous theorem. Newman gave a partial answer to this question when $\beta=0$, by showing that, under some additional conditions on $\cZ(\widehat{u})$, $\dim(\cZ(\widehat{u})) = \frac{2}{q}$ implies that $u$ is a cyclic vector (see \cite[Theorem 1]{NEW}). We need the notion of strong $\alpha$-measure, $\alpha \in (0,1)$, to state Newman's Theorem in the equality case. For $E$ a compact subset of $\T$, we note $(a_k,b_k)$, $k \in \N$ its complementary intervals arranged in non-increasing order of lengths and set
\begin{equation} \label{defrn}
r_n = 2\pi - \sum_{k=0}^n (b_k-a_k).
\end{equation}
We will say that $E$ has strong $\alpha$-measure $0$ if $$\lim_{n \to \infty} ~ r_n ~ n^{\frac{1}{\alpha}-1} = 0.$$
Notice that if $E$ has strong $\alpha$-measure $0$ then $H_\alpha(E)=0$. The converse is true for some particular sets like Cantor sets but in general the converse is false (for some countable sets).

\begin{theo}
Let $1<p<2$ and $u \in \ell^1(\Z)$.\\
If $\cZ(\widehat{u})$ has strong $\alpha$-measure $0$ where $\alpha = \frac{2}{q}$ then $u$ is cyclic in $\ell^p(\Z)$.
\end{theo}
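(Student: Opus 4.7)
My plan is to prove cyclicity via a duality-and-multiplier argument, exploiting the complementary-arc decomposition of $E:=\cZ(\widehat u)$ in conjunction with the strong $\alpha$-measure zero hypothesis. Cyclicity of $u$ in $\ell^p(\Z)$ is equivalent, after taking Fourier transforms, to the density of $\widehat u \cdot \mathcal T$ in $\A{p}:=\widehat{\ell^p(\Z)}$, where $\mathcal T$ denotes the trigonometric polynomials. Since $\mathcal T$ is dense in $\A{p}$ and multiplication by a fixed trigonometric polynomial is bounded on $\A{p}$, it is enough to exhibit trigonometric polynomials $(P_n)$ with $\widehat u\, P_n \to 1$ in the norm of $\A{p}$: indeed, for any $Q\in\mathcal T$ we then have $(Q P_n)\widehat u \to Q$ in $\A{p}$, and trigonometric polynomials are dense.

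Enumerate the complementary arcs of $E$ by non-increasing length as $(a_k,b_k)_{k\ge 0}$, and set $F_n=\bigcup_{k\le n}(a_k,b_k)$, $G_n=\T\setminus F_n$, so that $|G_n|=r_n$ and $E\subset G_n$. On each arc of $F_n$ the continuous function $\widehat u$ is bounded below in modulus away from its endpoints; truncating $1/\widehat u$ near those endpoints and gluing smoothly across a small neighbourhood of $E$ produces an auxiliary function $\psi_n \in \A{1}$ for which $\widehat u\,\psi_n$ equals $1$ on the bulk of $F_n$ and vanishes in a neighbourhood of $E$. Approximating $\psi_n$ in $\A{1}$-norm by a trigonometric polynomial $P_n$ of suitably chosen degree $N_n$ (via a Fej\'er mean, say) yields the candidate multipliers.

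The crux of the proof is the estimate $\|1-\widehat u\, P_n\|_{\A{p}}\to 0$. Writing
\[
1-\widehat u\, P_n = (1-\widehat u\,\psi_n) + \widehat u\,(\psi_n - P_n),
\]
the second term is made arbitrarily small by choosing $N_n$ large, since $\widehat u\in\A{1}$ and trigonometric polynomials are dense there. The first term is supported, up to thin transition layers near $\partial F_n$, on $G_n$, hence on a set of Lebesgue measure $\lesssim r_n$, with bounded sup-norm and controlled smoothness inherited from $\psi_n$. A careful Fourier-coefficient estimate---decomposing the error as a sum of local bumps on the small arcs $(a_k,b_k)$ with $k>n$ and exploiting both their decreasing lengths and the smoothing of $\psi_n$---should yield a bound of the shape $r_n^{1/q}\, n^{1/\alpha-1}$, which vanishes exactly when $r_n\,n^{1/\alpha-1}\to 0$. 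Since $\alpha=2/q$, this is precisely the strong $\alpha$-measure zero hypothesis.

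The main obstacle is this $\A{p}$-estimate at the critical threshold $\alpha=2/q$: unlike the sub-critical case $\dim(E)<2/q$ treated in Theorem~\ref{ThSansPoids}, no dimensional slack is available here. One must balance with great precision the degree $N_n$ of $P_n$ (controlling smoothness via Bernstein-type bounds), the Lipschitz constant of $\psi_n$ near $E$ (controlling the boundary transition cost), and the combinatorial distribution of the arc-lengths $l_k=b_k-a_k$ for $k>n$ (whose sum is $r_n$). The argument essentially implements a Hausdorff--Young-type interpolation at the endpoint $p=q'$, and the strong $\alpha$-measure zero condition is exactly what is needed for these competing constraints to cancel at the borderline.
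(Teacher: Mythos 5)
There is a genuine gap, and it sits exactly where you place ``the main obstacle'': the $\A{p}$-estimate of the term $1-\widehat u\,\psi_n$. You propose to control it by its small support (measure $\lesssim r_n$) together with ``controlled smoothness inherited from $\psi_n$''. But $1-\widehat u\,\psi_n$ does not inherit the smoothness of $\psi_n$: it carries the factor $\widehat u$, which is an arbitrary element of $\A{1}$ and in general has no modulus of continuity beyond that of a generic absolutely convergent Fourier series. A support-plus-boundedness argument only yields $\|1-\widehat u\,\psi_n\|_{\A{2}}\lesssim r_n^{1/2}$ via Parseval, and passing from $\A{2}$ to $\A{p}$ with $p<2$ requires decay of the high Fourier modes, i.e.\ genuine regularity of the function being estimated; without it the H\"older/interpolation step has nothing to bite on. Worse, any estimate that keeps $\widehat u$ inside the error term will degrade with the regularity of $\widehat u$ near $E=\cZ(\widehat u)$, whereas the theorem must hold uniformly over all $u\in\ell^1(\Z)$ with that zero set. (Your target bound is also quantitatively off: the achievable estimate is $(r_n\,n^{1/\alpha-1})^{1/q}$, not $r_n^{1/q}\,n^{1/\alpha-1}$; requiring the latter to vanish is strictly stronger than strong $\alpha$-measure zero.)

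The paper's proof removes $\widehat u$ from the estimate altogether by a spectral-synthesis reduction (Lemmas \ref{syntheseS} and \ref{lemme2Newman}): it suffices to produce Lipschitz functions $g_n$ vanishing on $E$ with $\|1-g_n\|_{\A{p}}\to 0$, since such $g_n$ automatically lie in the closed ideal of $\A{1}$ generated by $\widehat u$ and can therefore be replaced a posteriori by polynomial multiples $P_n\widehat u$. One then takes $g=1-\psi$ with $\psi$ the explicit tent function of slope $n^{1/\alpha}/\varepsilon$ equal to $1$ on $\T\setminus\bigcup_{k\le n}(a_k,b_k)$, and the second missing ingredient is the interpolation inequality of Lemma \ref{lemme3Newman}, which bounds $\|\psi\|_{\A{p}}$ by $\|\psi\|_{\A{2}}^{3/2-1/p}\bigl(\|\psi\|_{\A{2}}+\|\psi'\|_{\A{2}}\bigr)^{1/p-1/2}$. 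The two elementary bounds $\|\psi\|_{\A{2}}^2\lesssim \varepsilon\, n^{1-1/\alpha}$ and $\|\psi'\|_{\A{2}}^2\lesssim n^{1+1/\alpha}/\varepsilon$ then combine so that the powers of $n$ cancel precisely at $\alpha=2/q$, leaving $\|\psi\|_{\A{p}}\lesssim \varepsilon^{1/q}$. These two lemmas are the content your outline is missing, and I see no way to complete your direct multiplier construction without reintroducing them (or an equivalent device that decouples the estimate from $\widehat u$).
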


Moreover, in \cite{NEW}, Newman asked the question :
$$\textit{ For }u\in  \ell^1(\Z)\textit{,  does  }H_{2/q}(\cZ(\widehat{u}))=0\textit{ imply that } u \textit{ is cyclic in } \ell^p(\Z) \textit{ ?}$$
A positive answer to this question would contain Theorem \ref{WienerTh} and Theorem \ref{ThSansPoids}.$(1)$. We are not able to  answer this question completely. Nevertheless,  we show that if we replace $2/q$-measure by $h$-measure where $h(t)={t^{{2}/{q}}}{\ln(1/t)^{-\gamma}}$ with   $\gamma > \frac{2}{q}$ then the answer is negative.
Moreover we extend Newman's Theorem to $\ell^p_\beta(\Z)$.

\begin{theoB}\label{theoremeB}
Let $1 < p < 2$, $\beta \geq 0$ such that $\beta q < 1$.
\begin{enumerate}
\item If  $u \in \ell^1_\beta(\Z)$ and $\cZ(\widehat{u})$ has strong $\alpha$-measure $0$ where $\alpha = \frac{2}{q}(1 - \beta q)$ then $u$ is cyclic in $\ell^p_\beta(\Z)$.
\item For every $\gamma > \frac{2}{q}$, there exists a closed subset $E \subset \T$ such that every $u \in \ell^1_\beta(\Z)$ satisfying $\cZ(\widehat{u})=E$ is not cyclic in $\ell^p_\beta(\Z)$ and such that $H_h(E)=0$ where $h(t)={t^\alpha}{\ln(e/t)^{-\gamma}}$ with $\alpha = \frac{2}{q}(1 - \beta q)$ 
\end{enumerate}
\end{theoB}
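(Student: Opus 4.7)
The starting point is duality: via the pairing $\langle u,v\rangle = \sum_{n \in \Z} u_n \overline{v_n}$, the dual of $\ell^p_\beta(\Z)$ is naturally identified with $\ell^q_{-\beta}(\Z)$, and a standard translation-invariance argument shows that $u$ is cyclic in $\ell^p_\beta(\Z)$ if and only if no non-zero distribution on $\T$ with Fourier coefficients in $\ell^q_{-\beta}(\Z)$ is supported in $\cZ(\widehat u)$. Throughout, set $\alpha = \frac{2}{q}(1-\beta q)$ and let $(a_k,b_k)_{k\geq 0}$ denote the complementary arcs of $E = \cZ(\widehat u)$ arranged in decreasing order of length, with $r_n$ as in (\ref{defrn}).

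For part $(1)$ I would adapt Newman's scheme to the weighted setting. Assuming $r_n n^{1/\alpha-1}\to 0$, I construct for each $n$ a smooth cutoff $\psi_n : \T \to [0,1]$ supported in the union of the $n$ largest complementary arcs of $E$, equal to $1$ outside small neighborhoods of $E$, and smoothed on scales adapted to each gap. The central step is a weighted Bernstein-type estimate producing a bound of the form $\|\widehat{\psi_n}\|_{\ell^p_\beta} \lesssim (r_n n^{1/\alpha-1})^\sigma$ for some $\sigma > 0$, so that $\|\widehat{\psi_n}\|_{\ell^p_\beta}\to 0$. Once this is in hand, $\widehat u \cdot \psi_n$ is supported off a neighborhood of $E$ and can be approximated by $\widehat u \cdot P$ for trigonometric polynomials $P$ vanishing near $E$, placing the corresponding sequences in the closed translation-invariant subspace generated by $u$; letting $n\to\infty$ recovers $u$ itself. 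The main obstacle is precisely this weighted estimate on $\widehat{\psi_n}$, which explains the exponent $\alpha = (2/q)(1-\beta q)$: it represents the critical balance between the smoothing scale (driven by the weight $\beta q$) and the aggregate gap size (driven by $r_n$).

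For part $(2)$ I would construct a symmetric central-type Cantor set $E\subset \T$ of Hausdorff dimension $\alpha$ via a lacunary sequence $(\ell_k)$ of generation-$k$ interval lengths, and analyze the canonical self-similar probability measure $\mu$ supported on $E$. A Riesz-product-type expansion for $\widehat\mu$ combined with an appropriate logarithmic lacunarity in $(\ell_k)$ should yield a decay $|\widehat \mu(n)|\lesssim |n|^{-\alpha/2} \ln(2+|n|)^{-\gamma/2}$. Because $\alpha q/2 = 1-\beta q$, the sum $\sum_n |\widehat\mu(n)|^q (1+|n|)^{-q\beta}$ is then dominated by $\sum_n |n|^{-1}\ln(|n|)^{-\gamma q/2}$, convergent precisely when $\gamma > 2/q$. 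Hence $(\widehat\mu(n))_{n\in\Z} \in \ell^q_{-\beta}(\Z)$, and since $\mu$ is supported in $E$ this sequence annihilates every translate of any $u\in \ell^1_\beta(\Z)$ with $\cZ(\widehat u)= E$, defeating cyclicity. A direct covering computation using the $2^k$ generation-$k$ intervals yields $H_h(E)=0$ for $h(t) = t^\alpha \ln(e/t)^{-\gamma}$. The main obstacle is the simultaneous tuning of the lacunarity so that the Riesz product delivers the precise logarithmic gain $\ln(|n|)^{-\gamma/2}$; the threshold $\gamma > 2/q$ in the statement is exactly the integrability condition $\gamma q/2 > 1$ coming from the Fourier side, reflecting the optimality of the construction.
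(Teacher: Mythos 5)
Your outline for part (1) follows the same route as the paper's proof (Newman's tent-function construction on the $n$ largest complementary arcs, smoothed at scale $\varepsilon n^{-1/\alpha}$), but the two steps you defer are exactly the two places where the work is. The ``weighted Bernstein-type estimate'' is the paper's Lemma \ref{lemme3Newman}, the interpolation inequality $\|f\|_{A^p_\beta(\T)} \leq C^{1/p}\|f\|_{A^2(\T)}^{3/2-1/p-\beta}\,(\|f\|_{A^2(\T)}+\|f'\|_{A^2(\T)})^{1/p-1/2+\beta}$, proved by splitting the Fourier sum at $|n|\le y$ with $y^2=\|f'\|_{A^2}^2/\|f\|_{A^2}^2$ and applying H\"older on each piece; since the critical exponent $\alpha=\frac{2}{q}(1-\beta q)$ is produced entirely by this inequality (it gives $\|\psi\|_{A^p_\beta(\T)}\lesssim\varepsilon^{1/q-\beta}$ with $1/q-\beta>0$ precisely because $\beta q<1$), naming it as ``the main obstacle'' without an argument leaves the heart of (1) unproved. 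The concluding step is also misdirected: what must be placed in the closed span of $\{P\widehat u\}$ is the constant $1$ (equivalently $\delta_0$), not $u$ itself, and the passage from ``Lipschitz, vanishing on $\cZ(\widehat u)$, close to $1$ in $A^p_\beta(\T)$'' to ``limit of $P\widehat u$'' is not an approximation triviality but the spectral-synthesis statement (Lemmas \ref{syntheseS} and \ref{lemme2Newman}: a Lipschitz function vanishing on $\cZ_I$ belongs to the closed ideal $I$ of $A^1_\beta(\T)$). Your opening ``iff'' (cyclic if and only if no nonzero annihilating distribution is supported on $\cZ(\widehat u)$) is likewise only available in one direction in general; the converse is established only for measures (Proposition \ref{thcycl}).

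For part (2) you take a genuinely different route from the paper, and it has a genuine gap. A canonical self-similar measure on a deterministic central Cantor set does not in general satisfy $|\widehat\mu(n)|\lesssim |n|^{-\alpha/2}$ (the middle-thirds measure has $\widehat\mu(3^k)$ bounded away from $0$), and an actual Riesz product $\prod(1+r_k\cos n_kt)$ whose coefficients are small enough to force that decay has $\sum r_k^2<\infty$ and is therefore absolutely continuous, hence cannot be carried by a set of $H_h$-measure zero. Producing a probability measure supported on a set of $H_h$-measure zero, $h(t)=t^\alpha\ln(e/t)^{-\gamma}$, with the pointwise bound $|\widehat\mu(n)|\le(|n|^\alpha\ln(e|n|)^\gamma)^{-1/2}$ is an Iva\v{s}ev-Musatov/Salem-type theorem requiring either randomization or K\"orner's construction; the paper invokes K\"orner's Theorem \ref{Korner} with $\phi(t)=(t\ln(et))^{-1/2}$, and no ``tuning of the lacunarity'' of a deterministic self-similar construction will substitute for it. Your subsequent computation ($\alpha q/2+\beta q=1$, so $\sum_n|\widehat\mu(n)|^q(1+|n|)^{-\beta q}\lesssim\sum_n|n|^{-1}\ln(e|n|)^{-\gamma q/2}<\infty$ exactly when $\gamma>2/q$) and the duality conclusion via the annihilating measure are correct and coincide with the paper's.
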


Note that the set $E$ constructed in part (2) of Theorem B satisfy  $\dim(E) = \frac{2}{q}(1-\beta q)$.

\section{Preliminaries and lemmas}

Let $1 \leq p < \infty$ and $\beta \in \R$. 
We denote by $\DT$ the set of distributions on $\T$ and $\MT$ the set of measures on $\T$. For $S \in \DT$, we denote by $\widehat{S} = (\widehat{S}(n))_{n \in \Z}$ the sequence of Fourier coefficients of $S$ and we write $S = \sum_{n} \widehat{S}(n) e_n$, where $e_n(t)=e^{int}$. The space $A^p_\beta(\T)$ will be the set of all distributions $S \in \DT$ such that $\widehat{S}$ belongs to $\ell^p_\beta(\Z)$. We endow $A^p_\beta(\T)$ with the norm $\|S\|_{A^p_\beta(\T)} = \|\widehat{S}\|_{\ell^p_\beta}$. We will write $A^p(\T)$ for the space $A^p_0(\T)$. Thus the Fourier transformation is an isometric isomorphism between  $\ell^p_\beta(\Z)$ and $A^p_\beta(\T)$. We prefer to work with $A^p_\beta(\T)$ rather than $\ell^p_\beta(\Z)$. In this section we establish some properties of $A^p_\beta(\T)$ which will be needed to prove Theorems A and B.

\bigskip
For $1 \leq p < \infty$ and $\beta \geq 0$ we define the product of $f \in A^1_\beta(\T)$ and $S \in A^p_\beta(\T)$ by
$$fS = \sum_{n \in \Z} (\widehat{f} \ast \widehat{S}) (n) ~ e_n = \sum_{n \in \Z} \left( \sum_{k\in \Z} \widehat{f}(k)\widehat{S}(n-k) \right) e_n,$$
and we see that $\|fS\|_{A^p_\beta(\T)} \leq \|f\|_{A^1_\beta(\T)} \|S\|_{A^p_\beta(\T)}$. 
Note that if $S \in A^p_{-\beta}(\T)$ we can also define the product $fS \in A^p_{-\beta}(\T)$ by the same formula and obtain a similar inequality: $\|fS\|_{A^p_{-\beta}(\T)} \leq \|f\|_{A^1_\beta(\T)} \|S\|_{A^p_{-\beta}(\T)}$.

\bigskip
For $p \neq 1$, the dual space of $A^p_\beta(\T)$ can be identified with $A^q_{-\beta}(\T)$ ($q=\frac{p}{p-1}$) by the following formula
$$\langle S,T \rangle = \sum_{n \in \Z} \widehat{S}(n) \widehat{T}(-n), \qquad S \in A^p_\beta(\T),~ T \in A^q_{-\beta}(\T).$$

\bigskip
We denote by $\PT$ the set of trigonometric polynomials on $\T$. We rewrite the definition of cyclicity in the spaces $A^p_\beta(\T)$ for $\beta\geq 0$ : $S \in A^p_\beta(\T)$ will be a cyclic vector if the set $\{P S,~ P \in \PT \}$ is dense in $A^p_\beta(\T)$. It's clear that the cyclicity of  $S$ in $A^p_\beta(\T)$ is equivalent to the cyclicity of the sequence $\widehat{S}$ in $\ell^p_\beta(\Z)$.  Moreover 
for $1 \leq p < \infty$ and $\beta \geq 0$,  $S$  is cyclic in $A^p_\beta(\T)$ if and only if there exists a sequence $(P_n)$ of trigonometric polynomials such that
\begin{equation}\label{caraCyclNorm}
\lim_{n \to \infty} \|1-P_nS\|_{A^p_\beta(\T)} = 0.
\end{equation}

We need the following lemmas which gives us different inclusions between the $A^p_\beta(\T)$ spaces.

\begin{lemme} \label{inclusionAp}
Let $1 \leq r,s < \infty$ and $\beta,\gamma \in \R$.
\begin{enumerate}
\item If $r \leq s$ then $A^r_\beta(\T) \subset A^s_\gamma(\T) \Leftrightarrow \gamma \leq \beta$.
\item If $r>s$ then $A^r_\beta(\T) \subset A^s_\gamma(\T) \Leftrightarrow \beta - \gamma > \frac{1}{s} - \frac{1}{r}$.
\end{enumerate}
\end{lemme}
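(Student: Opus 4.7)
The plan is to pass to Fourier coefficients, so via the isometry $S \mapsto \widehat{S}$ the lemma becomes an embedding statement between the weighted sequence spaces $\ell^r_\beta(\Z) \subset \ell^s_\gamma(\Z)$, which can be handled directly.

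For part (1), the easy direction ($\Leftarrow$) combines two standard facts: when $r \leq s$, the unweighted embedding $\ell^r \subset \ell^s$ applied to $(u_n(1+|n|)^\beta)$ yields $\ell^r_\beta(\Z) \subset \ell^s_\beta(\Z)$; and when $\gamma \leq \beta$, the pointwise inequality $(1+|n|)^{s\gamma} \leq (1+|n|)^{s\beta}$ gives $\ell^s_\beta(\Z) \subset \ell^s_\gamma(\Z)$. For the converse, assume $\gamma > \beta$ and construct a counterexample supported on the lacunary set $n_k = 2^k$: writing $u_{n_k} = 2^{-k\beta} b_k$, the two norms reduce to $\sum |b_k|^r$ and $\sum |b_k|^s 2^{ks(\gamma-\beta)}$, so choosing $b_k = k^{-1/s} 2^{-k(\gamma-\beta)}$ makes the first sum converge (the geometric factor dominates) while the second becomes the divergent harmonic series.

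For part (2), the forward direction ($\Leftarrow$) follows from H\"older's inequality with conjugate exponents $r/s$ and $r/(r-s)$:
$$\sum_{n\in\Z} |u_n|^s (1+|n|)^{s\gamma} \leq \|u\|_{\ell^r_\beta}^s \left( \sum_{n\in\Z} (1+|n|)^{-sr(\beta-\gamma)/(r-s)} \right)^{(r-s)/r},$$
and the second factor is finite precisely when $sr(\beta-\gamma)/(r-s) > 1$, i.e.\ $\beta - \gamma > 1/s - 1/r$.

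For the reverse direction ($\Rightarrow$), assume $\beta - \gamma \leq 1/s - 1/r$. The natural test sequence is $u_n = (1+|n|)^{-\beta - 1/r - \varepsilon}$: when $\beta - \gamma < 1/s - 1/r$ strictly, choosing $\varepsilon > 0$ small enough puts $u$ in $\ell^r_\beta(\Z)$ while keeping $u \notin \ell^s_\gamma(\Z)$. The boundary case $\beta - \gamma = 1/s - 1/r$ requires a logarithmic refinement $u_n = (1+|n|)^{-\beta - 1/r} (\log(2+|n|))^{-a}$: both weighted sums become Bertrand-type series, and the strict inequality $r > s$ leaves room to pick $a \in (1/r, 1/s]$ so that $u \in \ell^r_\beta(\Z)$ but $u \notin \ell^s_\gamma(\Z)$. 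I expect this boundary construction to be the only mildly delicate step; everything else reduces to routine power-sum or integral-test calculations.
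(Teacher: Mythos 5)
Your proposal is correct and follows essentially the same route as the paper: reduce to the weighted sequence spaces via the Fourier isometry, use the $\ell^r\subset\ell^s$ embedding plus weight monotonicity and a lacunary-supported counterexample for (1), and H\"older's inequality plus power-type test sequences for (2), with a logarithmic (Bertrand-series) refinement at the endpoint $\beta-\gamma=\frac{1}{s}-\frac{1}{r}$ exactly as the paper does (its choice corresponds to your $a=1/s$). No gaps.
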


\begin{proof}
$(1)$ : We suppose that $r \leq s$. If $\gamma \leq \beta$ and $S \in A^r_\beta(\T)$, we have
$$\sum_{n \in \Z} |\widehat{S}(n)|^s (1+|n|)^{\gamma s} \leq \sum_{n \in \Z} |\widehat{S}(n)|^s (1+|n|)^{\beta s}.$$
Since $\| \cdot \|_{\ell^s} \leq \| \cdot \|_{\ell^r}$, we obtain $S \in A^s_\gamma(\T)$ and so $A^r_\beta(\T) \subset A^s_\gamma(\T)$.\\
Now suppose $\gamma > \beta$. Let $S \in \DT$ be given by
$$\widehat{S}(n)  (1+|n|)^\beta = \left\{
    \begin{array}{ll}
        {(1+m)^{-2/r}} & \mbox{if } |n|=2^m \\
        0 & \mbox{otherwise.}
    \end{array}
\right.$$
Then we have $S \in A^r_\beta(\T) \setminus A^s_\gamma(\T)$.

\bigskip
$(2)$ : Now suppose that $r>s$.
If $\beta - \gamma > \frac{1}{s} - \frac{1}{r}$, we have by H\"older's inequality,
$$\|S\|_{A^s_\gamma(\T)} \leq \|S\|_{A^r_\beta(\T)} \left( \sum_{n \in \Z} (1+|n|)^{\frac{rs}{r-s}(\gamma-\beta)} \right)^{1-s/r}, \qquad S \in A^r_\beta(\T),$$
so that $A^r_\beta(\T) \subset A^s_\gamma(\T)$.\\
Now suppose that $\beta - \gamma < \frac{1}{s} - \frac{1}{r}$. Let $\varepsilon > 0$ such that $\beta - \gamma + \varepsilon < \frac{1}{s} - \frac{1}{r}$, $\alpha = -\frac{1}{s} - \gamma + \varepsilon$ and let $S \in \DT$ be such that $\widehat{S}(n) = n^\alpha$. We have $S \in A^r_\beta(\T) \setminus A^s_\gamma(\T)$.

For the case $\beta - \gamma = \frac{1}{s} - \frac{1}{r}$ we take $S \in \DT$ such that $$\widehat{S}(n)^r (1+|n|)^{\beta r} = \frac{1}{(1+|n|) \ln (1+|n|)^{1+\varepsilon}}$$ with $\varepsilon = \frac{r}{s}-1 > 0$. We can show that $S \in A^r_\beta(\T) \setminus A^s_\gamma(\T)$ which proves that $A^r_\beta(\T) \not \subset A^s_\gamma(\T)$.
\end{proof}

\bigskip
For $E \subset \T$, we denote by $A^p_\beta(E)$ the set of $S \in A^p_\beta(\T)$ such that $\supp(S) \subset E$, where $\supp(S)$ denotes the support of the distribution $S$. The following lemma is a direct consequence of the definition of capacity (see \cite{KAH}) and the inclusion $A^q_{-\beta}(\T) \subset A^2_{\frac{\alpha-1}{2}}(\T)$ when $q \geq 2$ and $0 \leq \alpha < \frac{2}{q}(1-\beta q)$.

\begin{lemme} \label{KahaneSupp}
Let $E$ a Borel set, $\beta \geq 0$ and $q \geq 2$. If there exists $\alpha$, $0 \leq \alpha < \frac{2}{q}(1-\beta q)$, such that $C_\alpha(E)=0$ then 
$A^q_{-\beta}(E) = \{ 0 \}$.
\end{lemme}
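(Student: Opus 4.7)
The plan is to exploit the fact that, under the quantitative hypothesis on $\alpha$, every element of $A^q_{-\beta}(\T)$ already lies in the Hilbert space $A^2_{(\alpha-1)/2}(\T)$, and then invoke the classical dual characterization of $\alpha$-capacity via distributions.

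First I would verify the inclusion $A^q_{-\beta}(\T) \subset A^2_{(\alpha-1)/2}(\T)$ by applying Lemma~\ref{inclusionAp} with $r=q$, $s=2$. When $q=2$, part (1) applies and demands $(\alpha-1)/2 \leq -\beta$, i.e.\ $\alpha \leq 1-\beta q$, which is implied by the strict inequality $\alpha < \frac{2}{q}(1-\beta q)=1-\beta q$. When $q>2$, part (2) applies and demands $-\beta - (\alpha-1)/2 > 1/2 - 1/q$; a direct rearrangement shows this is equivalent to $\alpha < \frac{2}{q}(1-\beta q)$, which is again our standing hypothesis.

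Now take any $S \in A^q_{-\beta}(E)$. By the inclusion just established, $\supp(S)\subset E$ and
$$\sum_{n \in \Z} |\widehat{S}(n)|^2 (1+|n|)^{\alpha-1} < \infty,$$
so $S$ has finite distributional $\alpha$-energy in the sense that naturally extends the energy $I_\alpha$ from positive measures to arbitrary distributions on $\T$. The classical Beurling--Kahane characterization of capacity (see \cite{KAH}, pp.~33--35) asserts that $C_\alpha(E)=0$ precisely when no non-zero distribution supported in a compact subset of $E$ has finite $\alpha$-energy in this Fourier sense. Hence $S=0$, which is what we had to prove.

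The only step that goes beyond routine bookkeeping is the passage from probability measures (as used in the definition given in the introduction) to general distributions in the capacity criterion. This is the standard smoothing procedure: convolve $S$ with a non-negative summability kernel to produce non-zero finite-energy positive measures supported in an arbitrarily small neighborhood of $E$, then extract a weak-$\ast$ limit and apply Parseval to compare the energies. Since this construction is already recorded in Kahane's monograph, the lemma is indeed a direct consequence of Lemma~\ref{inclusionAp} together with that material.
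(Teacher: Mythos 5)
Your proof follows the paper's own route exactly: the paper likewise derives the lemma from the embedding $A^q_{-\beta}(\T)\subset A^2_{(\alpha-1)/2}(\T)$ (your case check of Lemma~\ref{inclusionAp} for $q=2$ and $q>2$ is correct) combined with the classical Kahane--Salem/Deny fact that a compact set of zero $\alpha$-capacity carries no non-zero distribution of finite $\alpha$-energy. The only caveat is that your closing sketch of that classical fact is imprecise (convolving a signed distribution with a non-negative kernel does not produce positive measures), but since you, like the paper, ultimately defer to Kahane's monograph for that result, this does not affect the argument.
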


\bigskip
We obtain the first results about cyclicity for the spaces $A^p_\beta(\T)$, when $A^p_\beta(\T)$ is a Banach algebra. More precisely, we have (see \cite{ZAR})

\begin{prop}
Let $1 \leq p < \infty$ and $\beta \geq 0$. 
$A^p_\beta(\T)$ is a Banach algebra if and only if $\beta q > 1$. Moreover when $\beta q > 1$, a vector $f \in A^p_\beta(\T)$ is cyclic in $A^p_\beta(\T)$ if and only if $f$ has no zeros on $\T$.
\end{prop}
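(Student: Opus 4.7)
The plan is to treat the Banach algebra statement and the cyclicity statement separately, then deduce the latter from Gelfand theory once the algebra structure is in place. For the sufficiency of $\beta q>1$ for the algebra property, Hölder's inequality with exponents $p,q$ would give
\[
\sum_{n\in\Z}|\widehat f(n)|\leq \|f\|_{A^p_\beta(\T)}\Bigl(\sum_{n\in\Z}(1+|n|)^{-\beta q}\Bigr)^{1/q},
\]
which is finite precisely when $\beta q>1$, so $A^p_\beta(\T)\hookrightarrow A^1(\T)\hookrightarrow\CT$ continuously. Setting $F_h(n):=(1+|n|)^\beta|\widehat h(n)|$, the elementary subadditivity $(1+|n|)^\beta\leq 2^\beta[(1+|k|)^\beta+(1+|n-k|)^\beta]$ (valid for $\beta\geq 0$, since $1+|n|\leq 2\max(1+|k|,1+|n-k|)$) yields
\[
(1+|n|)^\beta|\widehat{fg}(n)|\leq 2^\beta\bigl[(F_f\ast|\widehat g|)(n)+(|\widehat f|\ast F_g)(n)\bigr].
\]
Since $F_f,F_g\in\ell^p$ and $|\widehat f|,|\widehat g|\in\ell^1$ by the embedding, Young's inequality ($\ell^1\ast\ell^p\subset\ell^p$) gives $\|fg\|_{A^p_\beta(\T)}\leq C\|f\|_{A^p_\beta(\T)}\|g\|_{A^p_\beta(\T)}$, and after an equivalent renorming $A^p_\beta(\T)$ becomes a Banach algebra with unit~$1$.

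For the necessity (assuming $1<p$ and $\beta q\leq 1$) I would produce an explicit counterexample. In the strict case $\beta q<1$, take $f(t)=(1-e^{it})^{-s}$, whose Fourier coefficients vanish for $n<0$ and satisfy $\widehat f(n)\sim n^{s-1}/\Gamma(s)$; thus $f\in A^p_\beta(\T)\iff s<\tfrac{1}{q}-\beta$ while $f^2=(1-e^{it})^{-2s}\in A^p_\beta(\T)\iff 2s<\tfrac{1}{q}-\beta$, and any $s\in\bigl(\tfrac12(\tfrac{1}{q}-\beta),\,\tfrac{1}{q}-\beta\bigr)$ gives $f\in A^p_\beta(\T)$ with $f^2\notin A^p_\beta(\T)$. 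At the boundary $\beta q=1$ the same idea requires a logarithmic correction: take $\widehat f(n)=|n|^{-1/p-\beta}(\log|n|)^{-\alpha}$ for $|n|\geq 2$, so that $f\in A^p_\beta(\T)\iff\alpha>1/p$; splitting the convolution $\widehat f\ast\widehat f$ at $n/2$ and using $\sum_{k\leq n/2}k^{-1}(\log k)^{-\alpha}\sim(\log n)^{1-\alpha}/(1-\alpha)$ then shows $f^2\in A^p_\beta(\T)$ only for $\alpha>\tfrac12+\tfrac{1}{2p}$, and the interval $(1/p,\,\tfrac12+\tfrac{1}{2p}]$ is nonempty exactly because $p>1$.

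For cyclicity under $\beta q>1$, one direction is immediate: if $f$ is cyclic, polynomials $P_n$ with $P_nf\to 1$ in $A^p_\beta(\T)$ converge uniformly on $\T$ by the first step, so $P_n(t_0)f(t_0)\to 1$ at every $t_0$, forcing $f(t_0)\neq 0$. For the converse I would describe the maximal ideal space of $A^p_\beta(\T)$ via Gelfand theory: any continuous character $\chi$ is determined by $\chi(e_1)=:z$ because $\PT$ is dense, and $|z|^n=|\chi(e_n)|\leq\|e_n\|_{A^p_\beta(\T)}=(1+|n|)^\beta$ applied as $n\to\pm\infty$ forces $|z|=1$, so $\chi$ is evaluation at some $t_0\in\T$. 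The spectrum of $A^p_\beta(\T)$ is thus $\T$, and $f$ having no zero means every character is nonzero on $f$, i.e.\ $f$ is invertible and $1/f\in A^p_\beta(\T)$. Density of $\PT$ in $A^p_\beta(\T)$ together with the algebra norm inequality then gives polynomials $P_n\to 1/f$ with $\|1-P_nf\|_{A^p_\beta(\T)}\leq C\|f\|_{A^p_\beta(\T)}\|1/f-P_n\|_{A^p_\beta(\T)}\to 0$, so $f$ is cyclic. The delicate point I expect is the boundary case $\beta q=1$ of the necessity step, where a sharp logarithmic asymptotic of $\widehat f\ast\widehat f$ is needed; everything else reduces to Hölder, Young, and standard Gelfand theory.
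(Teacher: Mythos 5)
Your proof is correct, but there is nothing in the paper to compare it against: the paper states this proposition without proof, referring to El-Fallah--Nikolski--Zarrabi \cite{ZAR} for both the Banach-algebra criterion and the resulting Wiener-type characterization of cyclic vectors. What you supply is a complete, self-contained and essentially standard argument, and each of its three pieces checks out. The sufficiency step is sound: H\"older gives $A^p_\beta(\T)\hookrightarrow A^1(\T)$ exactly when $\beta q>1$, and the quasi-subadditivity of the weight $(1+|n|)^\beta$ plus Young's inequality $\ell^1\ast\ell^p\subset\ell^p$ yields submultiplicativity up to a constant. Your necessity counterexamples are the right ones: for $\beta q<1$ the one-sided coefficients of $(1-e^{it})^{-s}$ make the convolution a finite positive sum, so the failure $f\in A^p_\beta(\T)$, $f^2\notin A^p_\beta(\T)$ is unambiguous; at the endpoint $\beta q=1$ your computation $1/p+\beta=1$ and the lower bound $(\widehat f\ast\widehat f)(n)\gtrsim n^{-1}(\log n)^{1-2\alpha}$ give the window $\alpha\in(1/p,\tfrac12+\tfrac1{2p}]$, nonempty precisely for $p>1$ (for $p=1$ the necessity direction is vacuous under the usual convention $q=\infty$, and indeed $A^1_\beta(\T)$ is always an algebra). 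The Gelfand argument identifying the maximal ideal space with $\T$ (via $|z|^{|n|}\leq C(1+|n|)^\beta$ as $n\to\pm\infty$) and then passing from invertibility of $f$ to cyclicity through density of $\PT$ is exactly the expected mechanism; note it also reproves, in this regime, the implication used elsewhere in the paper that convergence in $A^p_\beta(\T)$ dominates uniform convergence, which gives the easy direction. The only thing the citation to \cite{ZAR} buys over your argument is generality (arbitrary Beurling--Sobolev weights and quantitative resolvent estimates), none of which is needed here.
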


\bigskip
Let $f \in A^1_\beta(\T)$ ans $S \in \DT$. We denote by $\cZ(f)$ the zero set of the function $f$. Recall that $e_n : t \mapsto e^{int}$.
\begin{lemme} \label{supportZeros}
Let $1 \leq p < \infty$ and $\beta \geq 0$. Let $f \in A^1_\beta(\T)$ and $S \in A^p_{-\beta}(\T)$. 
If for all $n \in \Z$, $\langle S,e_n f \rangle=0~$ then $~\supp(S) \subset \cZ(f)$.
\end{lemme}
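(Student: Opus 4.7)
The plan is to show that $S$ vanishes as a distribution on the open set $\T \setminus \cZ(f)$, which immediately yields $\supp(S) \subset \cZ(f)$.

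The first step is to recast the hypothesis. Using $\widehat{e_n f}(k) = \widehat{f}(k-n)$, a direct computation gives
$$\langle S, e_n f\rangle = \sum_{k \in \Z}\widehat{S}(k)\,\widehat{f}(-k-n) = \widehat{fS}(-n),$$
so the assumption is equivalent to saying that every Fourier coefficient of $fS \in A^p_{-\beta}(\T)$ vanishes, that is, $fS = 0$ in $A^p_{-\beta}(\T)$. A similar Fubini-type rearrangement, justified by the absolute bound $\|S\|_{A^p_{-\beta}(\T)}\|f\|_{A^1_\beta(\T)}\|g\|_{A^q_\beta(\T)}$, yields the identity $\langle fS, g\rangle = \langle S, fg\rangle$ for every $g \in A^q_\beta(\T)$. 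In particular,
$$\langle S, fg\rangle = 0 \qquad \text{for every } g \in A^1_\beta(\T) \subset A^q_\beta(\T).$$

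The second step is localization. Fix $t_0 \notin \cZ(f)$; since $f$ is continuous (as $\widehat{f} \in \ell^1$), $f(t_0) \neq 0$. Because $A^1_\beta(\T)$ is a regular semisimple commutative Banach algebra with maximal ideal space $\T$ (regularity follows from the inclusion $C^\infty(\T) \subset A^1_\beta(\T)$), the local Wiener--Levy theorem provides an open neighborhood $W$ of $t_0$ and $g_0 \in A^1_\beta(\T)$ with $g_0 f = 1$ on $W$. For any $\phi \in C^\infty(\T)$ with $\supp\phi \subset W$, the function $\phi g_0$ lies in $A^1_\beta(\T)$, and $\phi = (\phi g_0) f$, so the first step gives
$$\langle S, \phi\rangle = \langle S, (\phi g_0) f\rangle = 0.$$
Hence $S = 0$ as a distribution on $W$, and since $t_0 \in \T \setminus \cZ(f)$ was arbitrary, we conclude $\supp(S) \subset \cZ(f)$.

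The only nontrivial ingredient is the local invertibility in $A^1_\beta(\T)$. This is the classical local Wiener--Levy theorem, valid in any regular semisimple commutative Banach algebra and in particular for $A^1_\beta(\T)$ (cf.\ \cite{ZAR}); all other steps are routine Fourier bookkeeping combined with the continuity of the duality pairing $A^p_{-\beta}(\T) \times A^q_\beta(\T) \to \C$ and the Banach algebra property of $A^1_\beta(\T)$.
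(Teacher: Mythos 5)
Your proof is correct. The skeleton matches the paper's: both reduce the hypothesis to $fS=0$ and then kill $\langle S,\varphi\rangle$ for every smooth $\varphi$ supported off $\cZ(f)$ by writing $\varphi$ as $f$ times an element of $A^1_\beta(\T)$. The difference lies in how the division by $f$ is performed. The paper works globally on $\supp(\varphi)$: it chooses a trigonometric polynomial $P$ with $\|f-P\|_{A^1_\beta(\T)}$ small, expands $\varphi/f=\sum_{n\ge1}\varphi\,(P-f)^{n-1}/P^{n}$, and controls the terms through the embedding inequality \eqref{ineA1inf} for $C^1$ functions --- Newman's elementary proof of the $1/f$ theorem adapted to the weight. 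You instead localize at each $t_0\notin\cZ(f)$ and invoke local invertibility (the local Wiener--L\'evy theorem) in the regular semisimple unital Banach algebra $A^1_\beta(\T)$, whose regularity indeed follows from $C^\infty(\T)\subset A^1_\beta(\T)$ and whose maximal ideal space is $\T$; the identity $(\phi g_0)f=\phi$ and the absolutely convergent rearrangement $\langle fS,g\rangle=\langle S,fg\rangle$ are both sound. Your route trades an explicit elementary construction for a standard but nontrivial black box (which, if one wants it self-contained, follows from the invertibility of the class of $f$ in the quotient $A^1_\beta(\T)/k(K)$ for a compact neighborhood $K$ of $t_0$ on which $f$ does not vanish). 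In exchange it is valid for every $\beta\ge0$, whereas the paper's inequality \eqref{ineA1inf}, with its constant $\sqrt{(2-2\beta)/(1-2\beta)}$, implicitly restricts that argument to $\beta<1/2$ even though the lemma is stated for all $\beta\ge0$.
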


\begin{proof}
We have
$$\langle S,e_n f \rangle= \langle fS,e_n \rangle = 0.$$

Hence $fS=0$. Let $\varphi \in C^\infty(\T)$ such that $\supp(\varphi) \subset \T \setminus \cZ(f)$. We claim that $\displaystyle \frac{\varphi}{f} \in A^1_\beta(\T) \subset A^q_\beta(\T)$ where $q=\frac{p}{p-1}$. So we obtain
$$\langle S,\varphi \rangle = \langle fS,\frac{\varphi}{f} \rangle =0$$
which proves that $\supp(S) \subset \cZ(f)$.\\
Now we prove the claim. Let $\varepsilon = \min\{|f(t)|,~ t \in \supp(\varphi) \} > 0$ and $P \in \PT$ such that $\|f-P\|_{A^1_\beta(\T)} \leq \varepsilon/3$.

By the Cauchy-Schwarz and Parseval inequalities,  for  every $g \in C^1(\T)$, we get 
 \begin{equation} \label{ineA1inf}
\|g\|_{A^1_\beta(\T)} \leq \|g\|_\infty + 2 \sqrt{\frac{2-2\beta}{1-2\beta}} \|g'\|_\infty.
\end{equation}

Now,  as in \cite{Npams}, by applying \eqref{ineA1inf} to $\displaystyle \frac{\varphi}{P^n}$ we see that 
$$\frac{\varphi}{f} = \sum_{n \geq 1} \varphi ~ \frac{(P-f)^{n-1}}{P^n} \in A^1_\beta(\T),$$
which finishes the proof.
\end{proof}

\begin{prop} \label{thcycl}
Let $1 \leq p < \infty$ and $f \in A^1_\beta(\T)$ with $\beta \geq 0$. We have
\begin{enumerate}
\item If $f$ is not cyclic in $A^p_\beta(\T)$ then there exists $S \in A^q_{-\beta}(\T) \setminus \{0\}$ such that $\supp(S) \subset \cZ(f)$.
\item If there exists a nonzero measure $\mu \in A^q_{-\beta}(\T)$ such that $\supp(\mu) \subset \cZ(f)$ then $f$ is not cyclic in $A^p_\beta(\T)$.
\end{enumerate}
\end{prop}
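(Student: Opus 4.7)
The plan is to apply the Hahn--Banach theorem together with the duality $(A^p_\beta(\T))^* = A^q_{-\beta}(\T)$ recalled in the preliminaries, and then invoke Lemma \ref{supportZeros} for the support statement.

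For $(1)$, suppose $f$ is not cyclic in $A^p_\beta(\T)$. Then by the characterisation of cyclicity in $A^p_\beta(\T)$ the closed linear span $M$ of $\{e_k f : k \in \Z\}$ is a proper closed subspace of $A^p_\beta(\T)$, so Hahn--Banach produces a nonzero continuous linear functional on $A^p_\beta(\T)$ that vanishes on $M$. Via the stated duality, this functional has the form $T \mapsto \langle S,T \rangle$ for some $S \in A^q_{-\beta}(\T) \setminus \{0\}$, and $\langle S, e_n f \rangle = 0$ for every $n \in \Z$ because $e_n f \in M$. Lemma \ref{supportZeros} then gives $\supp(S) \subset \cZ(f)$, which is the required conclusion.

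For $(2)$, suppose there is a nonzero measure $\mu \in A^q_{-\beta}(\T)$ with $\supp(\mu) \subset \cZ(f)$. Since $\beta \geq 0$ we have $f \in A^1_\beta(\T) \subset A^1(\T)$, so $f$ is continuous and $Pf$ is continuous and vanishes on $\supp(\mu)$ for every $P \in \PT$; hence $\int Pf\,d\mu = 0$. A direct Fourier computation shows that on continuous functions this integral coincides with the duality pairing $\langle \mu, Pf \rangle = \sum_n \widehat{\mu}(n)\widehat{Pf}(-n)$, so $\langle \mu, Pf \rangle = 0$ for every $P \in \PT$. If $f$ were cyclic, the density of $\{Pf : P \in \PT\}$ in $A^p_\beta(\T)$ together with continuity of the pairing would force $\langle \mu, g \rangle = 0$ for every $g \in A^p_\beta(\T)$; testing against $g = e_{-n}$ gives $\widehat{\mu}(n) = 0$ for every $n \in \Z$, contradicting $\mu \neq 0$.

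The essentially only technical point is the identification of the pairing $\langle \mu, g \rangle = \sum_n \widehat{\mu}(n)\widehat{g}(-n)$ with $\int g\,d\mu$ when $g$ is continuous and $\mu$ is a measure; this is standard and is verified immediately on Dirac masses and then extended by linearity. The case $p=1$ (where $q = \infty$) is the classical Wiener situation and the argument still applies with $A^\infty_{-\beta}(\T)$ understood as the natural space of distributions with bounded weighted Fourier coefficients; otherwise the proof is a textbook duality argument whose only paper-specific input is Lemma \ref{supportZeros} in part $(1)$.
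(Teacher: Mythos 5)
Your proof is correct and follows essentially the same route as the paper: part (1) is the Hahn--Banach/duality argument combined with Lemma \ref{supportZeros}, and part (2) is the observation that a measure supported on $\cZ(f)$ annihilates every $Pf$, which you merely spell out in more detail (including the identification of the pairing with $\int Pf\,d\mu$ and the edge case $p=1$) than the paper's one-line version.
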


\begin{proof}
$(1)$ If $f$ is not cyclic in $A^p_\beta(\T)$, by duality there exists $S \in A^q_{-\beta}(\T) \setminus \{0\}$ such that
$$\langle S,e_nf \rangle=0, \qquad \forall n \in \Z.$$
Thus, by lemma \ref{supportZeros}, we have $\supp(S) \subset \cZ(f)$.

$(2)$ Let $\mu \in \A{q} \cap \MT \setminus \{0\}$ such that $\supp(\mu) \subset \cZ(f)$. Since $\mu$ is a measure on $\T$ we have $\langle \mu,e_nf \rangle=0$, for all $n \in \Z$. So $f$ is not cyclic in $A^p_\beta(\T)$.
\end{proof}

Recall that $A^1_\beta(\T)$ is a Banach algebra. Let $I$ be a closed ideal in $A^1_\beta(\T)$. We denote by $\cZ_I$ the set of common zeros of the functions of $I$,
$$\cZ_I = \bigcap_{f \in I} \cZ(f).$$
We have the following result about spectral synthesis in $A^1_\beta(\T)$.

\begin{lemme} \label{syntheseS} Let $0 \leq \beta < 1/2$. Let $I$ be a closed ideal in $A^1_\beta(\T)$. If $g$ is a Lipschitz function which vanishes on $\cZ_I$ then $g \in I$.
\end{lemme}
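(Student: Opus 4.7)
The plan is a Hahn-Banach duality argument. Since $I$ is closed in $A^1_\beta(\T)$, it is enough to show that every continuous linear functional on $A^1_\beta(\T)$ that vanishes on $I$ also vanishes on $g$. We identify the dual of $A^1_\beta(\T)$ with $A^\infty_{-\beta}(\T)$, the distributions $S$ with $\sup_n|\widehat{S}(n)|(1+|n|)^{-\beta}<\infty$, via $\langle S,T\rangle=\sum_n\widehat{S}(n)\widehat{T}(-n)$. Given such $S$ with $S|_I=0$, since $I$ is an ideal containing $e_n f$ for every $f\in I$ and $n\in\Z$, we have $\langle S,e_n f\rangle=0$; the argument of Lemma~\ref{supportZeros} (which extends verbatim to $p=1$, $q=\infty$ using $A^1_\beta(\T)\subset A^\infty_{-\beta}(\T)$) then gives $\supp S\subset\cZ(f)$ for each $f\in I$, hence $\supp S\subset E:=\cZ_I$.

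The task is reduced to showing: if $S\in A^\infty_{-\beta}(\T)$ with $\supp S\subset E$ and $g$ is Lipschitz with $g|_E=0$, then $\langle S,g\rangle=0$. For $\varepsilon>0$, let $V_\varepsilon=\{t\in\T:\dist(t,E)<\varepsilon\}$ and choose a smooth cutoff $\psi_\varepsilon$ equal to $0$ on $V_\varepsilon$, to $1$ off $V_{2\varepsilon}$, with $\|\psi_\varepsilon'\|_\infty\leq C/\varepsilon$. Set $g_\varepsilon=g\psi_\varepsilon$. Since $g_\varepsilon$ vanishes on a neighborhood of $\supp S$, one has $\langle S,g_\varepsilon\rangle=0$, and it remains to show $\|g-g_\varepsilon\|_{A^1_\beta(\T)}\to 0$, from which $\langle S,g\rangle=0$ follows by continuity of the pairing.

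The main obstacle is precisely this norm convergence: the inequality \eqref{ineA1inf} is too crude, because $(g-g_\varepsilon)'=g'(1-\psi_\varepsilon)-g\psi_\varepsilon'$ is only uniformly bounded, not small. The fix is to bypass \eqref{ineA1inf} via Cauchy-Schwarz:
\[
\|h\|_{A^1_\beta(\T)}^2 \leq \Bigl(\sum_n (1+|n|)^{2\beta-2}\Bigr)\cdot\|h\|_{H^1(\T)}^2,
\]
whose prefactor is finite exactly because $\beta<1/2$. Hence I need only prove $\|g-g_\varepsilon\|_{H^1(\T)}\to 0$. For the $L^2$-piece, $|g|\leq\|g\|_{\mathrm{Lip}}\dist(\cdot,E)\leq 2\varepsilon\|g\|_{\mathrm{Lip}}$ on $V_{2\varepsilon}$ gives $\|g(1-\psi_\varepsilon)\|_2=O(\varepsilon)$. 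For the derivative, the term $g\psi_\varepsilon'$ is $O(1)$ in $L^\infty$ and supported in $V_{2\varepsilon}\setminus V_\varepsilon$, whose measure tends to $0$, so its $L^2$ norm is $o(1)$. The term $g'(1-\psi_\varepsilon)$ is handled using the fact that a Lipschitz function vanishing on a closed set $E$ satisfies $g'=0$ a.e.\ on $E$ (a.e.\ point of $E$ is a density point, forcing the derivative to vanish there), so
\[
\|g'(1-\psi_\varepsilon)\|_2^2 \leq \|g\|_{\mathrm{Lip}}^2\,|V_{2\varepsilon}\setminus E|\to 0
\]
since $V_{2\varepsilon}\setminus E$ decreases to the empty set. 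This yields $\|g-g_\varepsilon\|_{H^1(\T)}\to 0$, hence $\|g-g_\varepsilon\|_{A^1_\beta(\T)}\to 0$, and so $\langle S,g\rangle=0$, which completes the proof via Hahn-Banach.
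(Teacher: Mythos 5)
Your argument is correct, but it proves the lemma by a genuinely different mechanism than the paper. The paper regularizes the \emph{dual} element: it convolves $S\in I^{\perp}$ with the triangular kernel $\Delta_h$, so that $S_h=S*\Delta_h$ becomes an $L^2$ function supported in the neighbourhood $\cZ_I^h$, and then uses the Lipschitz decay $|g|\lesssim h$ on $\cZ_I^h$ to cancel the factor $h^{-1}$ coming from $\|S_h\|_{L^2}$; the hypothesis $\beta<1/2$ enters through the convergence of $\sum \widehat{S}(n)^2/n^2$. You instead truncate the \emph{primal} element: you show that a Lipschitz function vanishing on $E$ is approximated in $H^1(\T)$ by Lipschitz functions vanishing on a neighbourhood of $E$, and $\beta<1/2$ enters through the embedding $H^1(\T)\hookrightarrow A^1_\beta(\T)$, i.e.\ the convergence of $\sum (1+|n|)^{2\beta-2}$ --- the same Cauchy--Schwarz seen from the other side. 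Your route is arguably more self-contained, since it isolates a clean synthesis statement purely about $H^1$, whereas the paper follows the Kahane--Salem scheme and thereby avoids invoking a.e.\ differentiability of $g$ and the density-point argument. Your three estimates ($\|g(1-\psi_\varepsilon)\|_2=O(\varepsilon)$ from $|g|\leq \|g\|_{\mathrm{Lip}}\dist(\cdot,E)$; $\|g\psi_\varepsilon'\|_2^2\lesssim |V_{2\varepsilon}\setminus V_\varepsilon|\to 0$; $\|g'(1-\psi_\varepsilon)\|_2^2\lesssim |V_{2\varepsilon}\setminus E|\to 0$ via $g'=0$ a.e.\ on $E$) are all valid, as is the reduction to $\supp S\subset \cZ_I$ via Lemma \ref{supportZeros}, which indeed applies with the duality $(A^1_\beta(\T))^*=A^\infty_{-\beta}(\T)$ exactly as the paper itself uses it. One detail you should make explicit: the step ``$g_\varepsilon$ vanishes on a neighbourhood of $\supp S$, hence $\langle S,g_\varepsilon\rangle=0$'' invokes the definition of distributional support, which is stated for $C^\infty$ test functions, while $g_\varepsilon$ is only Lipschitz; mollifying $g_\varepsilon$ at scale $\delta<\varepsilon$ (the mollifications still vanish near $\supp S$ and converge to $g_\varepsilon$ in $A^1_\beta(\T)$ by dominated convergence on the Fourier side) closes this. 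With that sentence added, the proof is complete.
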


\begin{proof}
The proof is similar to the one given in \cite{KAH} pp. 121-123. For the sake of completeness we give the important steps. Let $I^\bot$ be the set of all $S$ in the dual space of $A^1_\beta(\T)$ satisfying $\langle S, f \rangle = 0$ for all $f \in I$. Let $g$ be a Lipschitz function which vanishes on $\cZ_I$ and $S \in I^\bot$. By Lemma \ref{supportZeros}, $\supp(S) \subset \cZ_I$. For $h > 0$, we set $S_h=S*\Delta_h$ where $\Delta_h : t \mapsto \frac{-|t|}{h^2}+\frac{1}{h}$ if $t \in [-h,h]$ and $0$ otherwise. We have $\widehat{\Delta_h}(0)=1/2\pi$ and $\widehat{\Delta_h}(n)=\frac{1}{2\pi} \frac{4\sin(nh/2)^2}{(nh)^2}$ for $n \neq 0$. Since $S$ is in the dual of $A^1_\beta(\T)$, $S_h \in A^1(\T)$. Moreover we have $\supp(S_h) \subset \supp(S) + \supp(\Delta_h) \subset \cZ_I^h := \cZ_I+[-h,h]$. We have
\begin{eqnarray*}
|\langle S_h , g \rangle|^2 & = & \left| \int_{\cZ_I^h\setminus \cZ(g)} S_h(x)g(x) dx \right|^2\\
& \leq & \left( \sum_{n \in \Z} |\widehat{S}(n)\widehat{\Delta_h}(n)|^2 \right) \left( \int_{\cZ_I^h\setminus \cZ(g)} |g(x)|^2 dx \right)\\
& \leq & C \left( \sum_{n \in \Z} \frac{\widehat{S}(n)^2 }{n^2}  \right) \left( |\cZ_I^h\setminus \cZ(g)| \right)
\end{eqnarray*}
where $C$ is a positive constant and where $|E|$ denotes the Lebesgue measure of $E$. So 
$\lim_{h \to 0} \langle S_h , g \rangle = 0.$
By the dominated convergence theorem, we obtain that
$$\lim_{h \to 0} \langle S_h , g \rangle = \lim_{h \to 0} \sum_{n \in \Z} \widehat{S_h}(n) \widehat{g}(-n) =  \frac{1}{2\pi} \sum_{n \in \Z} \widehat{S}(n)\widehat{g}(-n) = \frac{1}{2\pi} \langle S , g \rangle.$$
So $\langle S , g \rangle = 0$. Therefore $g \in I$.
\end{proof}

We also need the following lemma which is a consequence of Lemma \ref{syntheseS}. Newman gave a proof of this when $\beta = 0$ (see \cite[Lemma 2]{NEW}).

\begin{lemme} \label{lemme2Newman}
Let $0 \leq \beta < 1/2$ and a closed set  $E \subset \T$.\\
There exists $(f_n)$ a sequence of Lipschitz functions which are zero on $E$ and such that
$$\lim_{n \to \infty} \|f_n-1\|_{A^p_\beta(\T)} = 0$$
if and only if every $f \in A^1_\beta(\T)$ satisfying $\cZ(f)=E$ is cyclic in $A^p_\beta(\T)$.
\end{lemme}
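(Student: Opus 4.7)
The plan is to prove both implications by exhibiting a single ``test function'' whose zero set is exactly $E$ and then transferring information between elements of $A^1_\beta(\T)$ with zero set $E$ via the spectral synthesis result of Lemma~\ref{syntheseS}. Two auxiliary facts will be used: (a) Lipschitz functions on $\T$ belong to $A^1_\beta(\T)$ when $\beta<1/2$, and (b) the inclusion $A^1_\beta(\T)\hookrightarrow A^p_\beta(\T)$ is continuous, which is Lemma~\ref{inclusionAp}(1).

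For the implication ``cyclicity $\Rightarrow$ existence of $(f_n)$'', I would take $\phi(t)=\dist(t,E)$, which is $1$-Lipschitz on $\T$ with $\cZ(\phi)=E$ since $E$ is closed. To verify $\phi\in A^1_\beta(\T)$, note that $\phi'\in L^\infty(\T)\subset L^2(\T)$ gives $\sum n^2|\widehat\phi(n)|^2<\infty$, whence by Cauchy--Schwarz
$$\|\phi\|_{A^1_\beta(\T)}\le \Bigl(\sum_{n\in\Z}(1+|n|)^2|\widehat\phi(n)|^2\Bigr)^{1/2}\Bigl(\sum_{n\in\Z}(1+|n|)^{2\beta-2}\Bigr)^{1/2},$$
and the second factor is finite precisely because $\beta<1/2$. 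By hypothesis $\phi$ is cyclic in $A^p_\beta(\T)$, so there exist $P_n\in\PT$ with $\|P_n\phi-1\|_{A^p_\beta(\T)}\to 0$; then $f_n:=P_n\phi$ is a product of a trigonometric polynomial and a Lipschitz function, hence Lipschitz, and vanishes on $E$.

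For the implication ``existence of $(f_n)$ $\Rightarrow$ cyclicity'', fix $f\in A^1_\beta(\T)$ with $\cZ(f)=E$ and let $I$ be the closed ideal of $A^1_\beta(\T)$ generated by $f$, i.e.\ the $A^1_\beta$-closure of $\{Pf:P\in\PT\}$ (this is an ideal because $\PT$ is dense in $A^1_\beta(\T)$ and multiplication is continuous). The continuous inclusion $A^1_\beta(\T)\hookrightarrow\CT$ implies that every element of $I$ vanishes on $\cZ(f)$, and since $f\in I$ we obtain $\cZ_I=\cZ(f)=E$. The given $f_n$ are Lipschitz functions vanishing on $\cZ_I=E$, so Lemma~\ref{syntheseS} yields $f_n\in I$: there exist $Q_{n,k}\in\PT$ with $Q_{n,k}f\to f_n$ in $A^1_\beta(\T)$. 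By the continuous embedding into $A^p_\beta(\T)$ the same convergence holds in $A^p_\beta(\T)$, and a diagonal extraction $k=k(n)$ furnishes trigonometric polynomials $R_n=Q_{n,k(n)}$ with $R_nf\to 1$ in $A^p_\beta(\T)$, proving $f$ is cyclic.

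The substantive ingredient is Lemma~\ref{syntheseS} (spectral synthesis for Lipschitz functions in $A^1_\beta$); once this is granted, the argument is the routine combination above of a principal ideal construction with the two embeddings. The hypothesis $\beta<1/2$ enters in both directions: explicitly in verifying $\dist(\cdot,E)\in A^1_\beta(\T)$ for the reverse implication, and via the standing hypothesis of Lemma~\ref{syntheseS} for the direct implication, so no further obstacle is anticipated.
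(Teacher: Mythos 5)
Your proof is correct and follows exactly the route the paper intends: the paper omits the argument, stating only that the lemma is a consequence of Lemma~\ref{syntheseS} (following Newman's Lemma~2 for $\beta=0$), and your combination of the distance function $\dist(\cdot,E)\in A^1_\beta(\T)$ for one direction with the closed principal ideal and spectral synthesis for the other is precisely that argument. No gaps.
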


\section{Proof of Theorem A}

Before proving Theorem A, let us recall Salem's Theorem (see \cite{SAL} and \cite{KAH} pp. 106-110).

\begin{theo} \label{Salem}
Let $0 < \alpha < 1$ and $q > \frac{2}{\alpha}$.\\
There exists a compact set $E \subset \T$ which satisfies $\dim(E)=\alpha$ and there exists a positive measure $\mu \in A^q(\T) \setminus \{0\}$ such that $\supp(\mu) \subset E$.
\end{theo}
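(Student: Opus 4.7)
The plan is to realise $E$ as a random Cantor-type set of Hausdorff dimension $\alpha$, and to show that a natural probability measure $\mu$ carried by $E$ has Fourier decay of order $|k|^{-\alpha/2}$, with at most an arbitrarily small loss in the exponent. Since $q > 2/\alpha$, I fix $\varepsilon > 0$ so small that $q(\alpha/2 - \varepsilon) > 1$; this will make $|\widehat{\mu}|^q$ summable. The whole proof is Salem's randomisation scheme (see \cite{KAH}).

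\textbf{Construction of $E$ and $\mu$.} Fix a large integer $M$. Inductively, at generation $n$, I keep $M^n$ pairwise disjoint closed intervals of length $M^{-n/\alpha}$. Each generation-$n$ interval $I$ is subdivided and $M$ sub-intervals of length $M^{-(n+1)/\alpha}$ are chosen inside $I$ by a symmetric probability law (invariant under the reflection about the centre of $I$) whose supports have room to move, since $M \cdot M^{-(n+1)/\alpha} \ll M^{-n/\alpha}$; the selections made inside different generation-$n$ intervals are mutually independent. Let $E$ be the intersection of these unions, and let $\mu$ be the weak-$*$ limit of the probability measures $\mu_n$ placing mass $M^{-n}$ uniformly on each generation-$n$ interval. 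Then $\mu$ is a nonzero probability measure with $\supp(\mu) \subset E$, and covering $E$ by its generation-$n$ intervals gives $\dim E \leq \alpha$.

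\textbf{Probabilistic Fourier estimate.} Because the displacements are independent between different generation-$n$ intervals, for every $k \neq 0$ the coefficient $\widehat{\mu}(k)$ factorises as
$$\widehat{\mu}(k) = \prod_{n \geq 1} A_n(k),$$
where each $A_n(k)$ is an average of $M$ independent bounded random phases that has mean zero (thanks to the symmetry of the law) as soon as $|k| M^{-n/\alpha} \gtrsim 1$. A second-moment calculation gives $\mathbb{E}[|\widehat{\mu}(k)|^2] \leq C|k|^{-\alpha}$, and a sub-Gaussian concentration inequality (Hoeffding applied level by level) then yields
$$\mathbb{P}\bigl(|\widehat{\mu}(k)| > |k|^{-\alpha/2 + \varepsilon}\bigr) \leq C \exp\bigl(-c|k|^{\varepsilon}\bigr).$$
The Borel--Cantelli lemma produces a realisation for which $|\widehat{\mu}(k)| = O(|k|^{-\alpha/2 + \varepsilon})$ as $|k| \to \infty$; the matching lower bound $\dim E \geq \alpha$ then follows from Frostman's lemma since this decay forces $\mu$ to have finite $(\alpha - 2\varepsilon)$-energy.

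\textbf{Conclusion.} For such a realisation,
$$\sum_{k \in \Z} |\widehat{\mu}(k)|^q \leq |\widehat{\mu}(0)|^q + C \sum_{k \neq 0} |k|^{-q(\alpha/2 - \varepsilon)} < \infty$$
by the choice of $\varepsilon$, so $\mu \in \A{q}$, $\mu$ is a nonzero positive measure, $\supp(\mu) \subset E$, and $\dim E = \alpha$, as required. The main obstacle in the plan is the sharp Fourier decay estimate with exponent $\alpha/2$: because $q$ may be taken arbitrarily close to $2/\alpha$, one cannot afford any polynomial loss in the decay exponent, so the randomisation must be designed carefully so that the independence between the factors $A_n(k)$ really produces the full square-root gain at every generation. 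Once this estimate is in hand, everything else is bookkeeping.
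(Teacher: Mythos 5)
This statement is quoted in the paper as Salem's classical theorem, with a pointer to \cite{SAL} and \cite{KAH} pp.~106--110; the paper gives no proof, so you are reconstructing the classical argument. Your overall strategy (random Cantor set of dimension $\alpha$, natural measure $\mu$, Fourier decay $|k|^{-\alpha/2+\varepsilon}$, then summability of $|\widehat{\mu}(k)|^q$ because $q(\alpha/2-\varepsilon)>1$) is the right one and is essentially Salem's. But the key estimate, which you yourself identify as the main obstacle, is not actually established, and the one concrete mechanism you offer for it is inconsistent with your construction. If the $M$ sub-intervals are chosen \emph{independently inside different generation-$n$ intervals}, then $\widehat{\mu}(k)$ does \emph{not} factorise as $\prod_n A_n(k)$: one gets $\widehat{\mu_{n+1}}(k)=\sum_{I} M^{-n}e^{ikc_I}B_I(k)$ with i.i.d.\ local factors $B_I$, which is a sum, not a product. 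The exact product formula holds only for the \emph{homogeneous} construction in which every generation-$n$ interval uses the same random offsets (this is what Salem and Kahane--Salem do, and then the factors at distinct levels are genuinely independent); in the inhomogeneous construction one must instead run a martingale argument, applying Hoeffding/Azuma to the increments $\widehat{\mu_{n+1}}(k)-\mathbb{E}[\widehat{\mu_{n+1}}(k)\mid\mathcal{F}_n]$. Either route can be made to work, but as written you assert the product structure \emph{because of} the independence across intervals, which is exactly the feature that destroys it. A second, smaller error: symmetry of the offset law makes $\mathbb{E}[e^{ik\delta}]$ real, not zero; what you need is that $|\mathbb{E}[e^{ik\delta}]|$ is small once $|k|$ exceeds the reciprocal of the spreading scale, which requires the law to be genuinely spread out (e.g.\ uniform on an interval of length comparable to the parent), not merely symmetric.

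There is also a gap in the dimension count. Finiteness of the $(\alpha-2\varepsilon)$-energy gives only $\dim E\ge\alpha-2\varepsilon$ via Frostman, not $\dim E\ge\alpha$, and you cannot let $\varepsilon\to0$ for a single realisation. The fix is elementary and does not use the Fourier decay at all: each generation-$n$ interval carries mass $M^{-n}$ and has length $M^{-n/\alpha}$, so $\mu(I)\le C|I|^{\alpha}$ for every interval $I$ (interpolating between generations, using the separation of the sub-intervals), and the mass distribution principle gives $\dim E\ge\alpha$ deterministically; combined with the covering upper bound this yields $\dim E=\alpha$ exactly. With that correction, and with the Fourier estimate rewritten either in the homogeneous (true product) form or as a level-by-level martingale concentration argument, your sketch becomes a correct proof; as it stands, the decisive step $|\widehat{\mu}(k)|=O(|k|^{-\alpha/2+\varepsilon})$ rests on a factorisation that is false for the construction you describe.
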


To prove Theorem A, we also need the following result. The case $\beta = 0$ was considered by Newman in \cite{NEW}. For $k \in \N$ and $E \subset \T$, we denote 
$$k \times E = E + E + ... + E = \left\{ \sum_{n=1}^k x_n,~ x_n \in E \right\}.$$

\begin{theo} \label{ThNewman1}
Let $1 <  p < 2$ and $\beta > 0$ such that $\beta q \leq 1$, and let  $f \in A^1_\beta(\T)$.
\begin{enumerate}[(a)]
\item Let  $k \in \N^*$ be such that $k \leq q/2$.  If $C_\alpha(k \times \cZ(f)) = 0$ for some  $\alpha < \frac{2}{q} (1 - \beta q )k$, then $f$ is cyclic in $A^p_\beta(\T)$.
\item Let  $k \in \N^*$ be such that $ q/2\leq k \leq  1/(2\beta)$. If $C_\alpha(k \times \cZ(f)) = 0$ where  $\alpha = 1-2k\beta$, then $f$ is cyclic in $A^p_\beta(\T)$.
\end{enumerate}
\end{theo}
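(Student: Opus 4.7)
The plan is to argue by contradiction: assuming $f$ is not cyclic, I would construct via convolution a nonzero distribution supported in $k \times \cZ(f)$ that must vanish by the capacity hypothesis. Indeed, by Proposition \ref{thcycl}(1), non-cyclicity yields some $S \in A^q_{-\beta}(\T) \setminus \{0\}$ with $\supp(S) \subset \cZ(f)$. Setting $T := S^{\ast k}$ gives $\widehat{T}(n) = \widehat{S}(n)^k$ for every $n$, so $T \neq 0$, $\supp(T) \subset k \times \cZ(f)$, and
$$\sum_{n \in \Z} |\widehat{T}(n)|^{q/k}(1+|n|)^{-k\beta \cdot q/k} = \sum_{n \in \Z} |\widehat{S}(n)|^{q}(1+|n|)^{-\beta q} < \infty,$$
placing $T$ in $A^{q/k}_{-k\beta}(\T)$. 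The strategy is then to embed $T$ into a suitable $A^2_{\gamma}(\T)$ and apply Lemma \ref{KahaneSupp} with $E = k \times \cZ(f)$ to force $T = 0$, contradicting $T \neq 0$.

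In part (a), the condition $k \leq q/2$ gives $q/k \geq 2$, so Lemma \ref{inclusionAp} yields, for every $\gamma < \frac{k}{q}(1-\beta q) - \frac{1}{2}$, the embedding $A^{q/k}_{-k\beta}(\T) \hookrightarrow A^2_{\gamma}(\T)$. Given $C_\alpha(k \times \cZ(f)) = 0$ with $\alpha < \frac{2k}{q}(1-\beta q)$, the strict inequality $\frac{\alpha-1}{2} < \frac{k}{q}(1-\beta q) - \frac{1}{2}$ is precisely what allows me to select a $\gamma$ satisfying $\frac{\alpha-1}{2} < \gamma < \frac{k}{q}(1-\beta q) - \frac{1}{2}$. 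Then $T \in A^{2}_{\gamma}(k \times \cZ(f))$ and $\alpha < 1 + 2\gamma$, so Lemma \ref{KahaneSupp} applied with $q' = 2$ and $\beta' = -\gamma$ forces $T = 0$.

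In part (b), the condition $k \geq q/2$ only gives $q/k \leq 2$, so Lemma \ref{inclusionAp}(1) provides merely $T \in A^2_{-k\beta}(\T)$ at the same weight. A direct application of Lemma \ref{KahaneSupp} with $q' = 2$, $\beta' = k\beta$ would demand $C_\alpha(k \times \cZ(f)) = 0$ at some $\alpha$ \emph{strictly} less than $1-2k\beta$, whereas the hypothesis supplies only the endpoint value $\alpha = 1 - 2k\beta$. Reaching this endpoint is the main obstacle. I would close the gap by invoking the Hilbert-space refinement $C_{1-2k\beta}(F) = 0 \Leftrightarrow A^2_{-k\beta}(F) = \{0\}$ valid for Borel $F \subset \T$, which underlies the Richter--Ross--Sundberg characterization recalled in the introduction and is obtained via a Parseval/Frostman identity at the Hilbert exponent (extracting from any nonzero element of $A^2_{-k\beta}(F)$ a positive-definite object whose energy matches the $A^2_{-k\beta}$-norm). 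With this endpoint refinement of Lemma \ref{KahaneSupp} in hand, the convolution argument above delivers $T = 0$ and hence the contradiction.
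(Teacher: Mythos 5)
Your argument is correct, and it takes a genuinely different (and more economical) route than the paper's. The paper follows Newman: using the representation lemma of \cite{NEW2} it writes the annihilating functional as $L(g)=\int_\T\big(g'\phi+g\big)\dd x$ with $\phi\in L^2(\T)$, so that the annihilator is $\phi'-1$ with $\supp(\phi'-1)\subset\cZ(f)$; it then shows by induction that $\big(\phi^{*m}\big)^{(m)}+(-1)^m$ is supported in $m\times\cZ(f)$, and the final contradiction is read off the zeroth Fourier coefficient (the derivative $\big(\phi^{*k}\big)^{(k)}$ has zero mean, so it cannot equal the nonzero constant $(-1)^{k-1}$). You instead convolve the annihilating distribution $S$ of Proposition \ref{thcycl}(1) with itself, and your contradiction is simply $S^{*k}=0\Rightarrow\widehat S(n)^k=0$ for all $n\Rightarrow S=0$. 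Both routes rest on the same two pillars, namely $\supp(S*T)\subset\supp(S)+\supp(T)$ and the Kahane--Salem fact behind Lemma \ref{KahaneSupp}, and your exponent bookkeeping ($S^{*k}\in A^{q/k}_{-k\beta}(\T)$, with $q/k\geq 2$ in case (a) and $q/k\leq 2$ in case (b)) coincides with the paper's computation for $\big(\phi^{*k}\big)^{(k)}$; what your route buys is that it bypasses \cite{NEW2} entirely. Two minor remarks. First, in case (a) you can apply Lemma \ref{KahaneSupp} directly with exponent $q/k\geq 2$ and weight $k\beta$, whose hypothesis reads exactly $\alpha<\frac{2k}{q}(1-\beta q)$; the detour through $A^2_\gamma$ is correct (your window for $\gamma$ is nonempty precisely under the stated hypothesis, and $\gamma<0$ because $k\leq q/2$) but unnecessary. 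Second, your explicit handling of the endpoint in (b) is warranted: the paper itself invokes Lemma \ref{KahaneSupp} there with $q=2$ and $\alpha=1-2k\beta$, i.e.\ at the boundary of the lemma's stated hypothesis, and what is really being used in both your proof and the paper's is the $q=2$ statement from \cite{KAH} that a compact set of zero $\alpha$-capacity carries no nonzero distribution of finite $\alpha$-energy, which needs no strict inequality.
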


\begin{proof}
Let $k \in \N^*$. Suppose that $f$ is not cyclic in $A^p_\beta(\T)$. Then there exists $L \in A^q_{-\beta}(\T)$, the dual of $ A^p_\beta(\T)$, such that $L(1)=1$ and $L(Pf) = 0$, for all $P \in \PT$.

Since $\beta < \frac{1}{2}$, by \eqref{ineA1inf},  we get $C^1(\T)\subset A^1_\beta(\T)\subset A^p_\beta(\T)$, { and by \cite{NEW2}} (see also \cite[Lemma 5]{NEW}),  there exists $\phi \in L^2(\T)$ such that
$$L(g) = \int_\T \big(g'(x)\phi(x)+g(x)\big) \dd x, \qquad \forall g \in C^1(\T).$$
Since $L \in A^q_{-\beta}(\T)$ which implies $(L(e_n))_{n \in \Z} \in \ell^q_{-\beta}(\Z)$, we obtain
\begin{equation} \label{eqNewman}
\sum_{n \in \Z} |n \widehat{\phi}(n)|^q (1+|n|)^{-\beta q} < \infty.
\end{equation}
Moreover we have,
$$\int_\T\big( (e_nf)'(x)\phi(x)+(e_nf)(x) \big)\dd x = 0, \qquad n \in \Z,$$
and so $\langle \phi'-1,e_nf \rangle =0$ where $\phi'$ is defined in terms of distribution.
By \eqref{eqNewman}, $\phi'-1 \in A^q_{-\beta}(\T)$, so by lemma \ref{supportZeros}, we get $\supp(\phi'-1) \subset \cZ(f)$.

For $m \in \N$, we denote by $\phi^{*m}$ the result of convolving $\phi$ with itself $m$ times. Using the fact that $S'*T=S*T'$ and $1*S'=0$ for any distributions $S$ and $T$, we have
$$(\phi'-1) * \left(\left(\phi^{*(m-1)}\right)^{(m-1)}+(-1)^{m-1}\right) =  \left(\phi^{*m}\right)^{(m)} + (-1)^{m}.$$
So we can show by induction on $m \geq 1$ and by the formula $\supp(T * S) \subset \supp(T) + \supp(S)$ that
\begin{equation} \label{SupportPhi}
\supp \left(\left(\phi^{*m}\right)^{(m)} +(-1)^m \right) \subset m \times \cZ(f), \qquad \forall m \geq 1.
\end{equation}
Note that $\widehat{\left(\phi^{*k}\right)^{(k)}}(n)={i^k}n^k \widehat{\phi}(n)^k$ for $k \geq 1$ and $n \in \Z$.

\bigskip
$(a)$ : Suppose that $0 < k \leq q/2$ and $C_\alpha(k \times \cZ(f)) = 0$ for some  $\alpha < \frac{2}{q} (1 - \beta q )k$. We rewrite \eqref{eqNewman} as
$$\sum_{n \in \Z} \left( |n \widehat{\phi}(n)|^k \right)^{\frac{q}{k}} (1+|n|)^{-\frac{q}{k} \beta k} < \infty.$$
So, if we set $q' = \frac{q}{k} \geq 2$ and $\beta' = \beta k$, we have $\left(\phi^{*k}\right)^{(k)} \in A^{q'}_{-\beta'}(\T)$. By \eqref{SupportPhi} and by Lemma \ref{KahaneSupp} we obtain that  $\left(\phi^{*k}\right)^{(k)} = (-1)^{k-1}$. This contradicts the fact that   $\widehat{\left(\phi^{*k}\right)^{(k)}}(0)=0$.

\bigskip
$(b)$ : Now suppose that $ k \geq q/2$ and $C_\alpha(k \times \cZ(f)) = 0$ where $\alpha = 1-2k\beta$. Since $q \leq 2k$, we have by \eqref{eqNewman},
$$\sum_{n \in \Z} |n \widehat{\phi}(n)|^{2k} (1+|n|)^{-2k\beta} < \infty.$$
So $\left(\phi^{*k}\right)^{(k)} \in  A^2_{-k\beta}(\T)$ and $\left(\phi^{*k}\right)^{(k)} = (-1)^{k-1}$. Again this is absurd since $\widehat{\left(\phi^{*k}\right)^{(k)}}(0)=0$.
\end{proof}

We need to compute the capacity of the Minkowski sum of some Cantor type subset of $\T$. 
We denote by $[x]$ the integer part of $x \in \R$.
For $\lambda \in [0,1]$ and $k \in \N^*$, we define
$$K_\lambda^k = \{ m \in \N,~ \exists j \in \N, ~ m \in [2^j,2^j(1+\lambda+1/j)-k+1] \}$$
and we set in $\R/\Z \simeq [0,1[$,
$$S_\lambda^k = \left\{ x = \sum_{i=0}^\infty \frac{x_i}{2^{i+1}},~ (x_i) \in \{0,1\}^\N \text{ such that } i \in K_\lambda^k \Rightarrow x_i=0 \right\}.$$
We denote $K_\lambda=K_\lambda^1$ and $S_\lambda=S_\lambda^1$.

To prove $(4)$ of Theorem A we need the following lemma.

\begin{lemme} \label{dimHauss}
For all $k \geq 1$, we have 
\begin{enumerate}
\item $k \times S_\lambda \subset S_\lambda^k$ ;
\item $C_\alpha(S_\lambda^k)=0$ if and only if $\alpha \geq \frac{1-\lambda}{1+\lambda}$ ;
\item $\dim(k \times S_\lambda) = \frac{1-\lambda}{1+\lambda}$ and $C_{\frac{1-\lambda}{1+\lambda}}(k \times S_\lambda)=0$.
\end{enumerate}
\end{lemme}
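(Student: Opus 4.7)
The overall plan is to deduce (1) by a direct binary-addition-with-carry argument, to establish (2) via matched upper and lower bounds on the $\alpha$-energy of probability measures on $S_\lambda^k$, and to obtain (3) by combining (1) and (2) with the inclusion $S_\lambda \subset k\times S_\lambda \subset S_\lambda^k$ and monotonicity of $\dim$ and $C_\alpha$. For (1), I would fix $m \in K_\lambda^k$, so that $m \in [2^j, 2^j(1+\lambda+1/j)-k+1]$ for some $j$; the $k$ consecutive integers $m, m+1, \ldots, m+k-1$ then all lie in $[2^j, 2^j(1+\lambda+1/j)] \subset K_\lambda$, so every $x^{(l)} \in S_\lambda$ has zero digit at each of these positions. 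Writing $s_i = \sum_l x_i^{(l)}$ and $s_i + c_{i+1} = 2 c_i + d_i$ for the digits $d_i$ of $\sum_l x^{(l)} \bmod 1$, induction (using $\sum_l x^{(l)} \in [0,k)$) yields $c_i \leq k-1$, and across the zero block the recurrence collapses to $c_i = \lfloor c_{i+1}/2 \rfloor$. Since $c_{m+k} \leq k-1 < 2^{k-1}$ for $k \geq 2$ (the case $k=1$ being trivial), $k-1$ halvings force $d_m = 0$, so $\sum_l x^{(l)} \in S_\lambda^k$.

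For (2), set $\alpha_0 = (1-\lambda)/(1+\lambda)$ and work at the natural scales $\delta_j = 2^{-M_j}$ with $M_j = 2^j(1+\lambda+1/j)-k+1$, at which $S_\lambda^k$ is supported on $N_j = 2^{F_j}$ dyadic intervals; here $F_j \sim (1-\lambda)\,2^j - c\,2^j/j$ counts free positions below $M_j$, and the $1/j$ slack in the definition of $K_\lambda^k$ will be decisive. For $C_\alpha(S_\lambda^k)=0$ when $\alpha \geq \alpha_0$, I would invoke the Cauchy--Schwarz lower bound
\[
I_\alpha(\mu) \gtrsim \sum_j 2^{M_j\alpha}\,\mu\otimes\mu\bigl(\{|x-y|\leq \delta_j\}\bigr) \geq \sum_j 2^{M_j\alpha}\,N_j^{-1} = \sum_j 2^{M_j\alpha - F_j},
\]
valid for every probability measure $\mu$ on $S_\lambda^k$; at $\alpha = \alpha_0$ the leading exponent $2^j[(1+\lambda)\alpha_0 - (1-\lambda)]$ vanishes while the slack makes $M_j\alpha_0 - F_j \sim C\,2^j/j \to +\infty$, so the sum diverges. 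For $\alpha < \alpha_0$, the natural product measure $\mu$ on $S_\lambda^k$ (uniform on $\{0,1\}$ at each free position) satisfies $\mu(B(x, 2^{-M})) \asymp 2^{-F(M)}$, and I would estimate $I_\alpha(\mu) \asymp \sum_M 2^{M\alpha - F(M)}$; the dominant terms at $M = M_j$ then carry exponent $\sim 2^j[(1+\lambda)\alpha - (1-\lambda)] \to -\infty$ super-exponentially, so $I_\alpha(\mu)<\infty$ and $C_\alpha(S_\lambda^k)>0$.

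Once (2) is in hand, (3) follows by monotonicity: (1) gives $C_{\alpha_0}(k\times S_\lambda) \leq C_{\alpha_0}(S_\lambda^k) = 0$, and the trivial inclusion $S_\lambda \subset k\times S_\lambda$ (take $k-1$ summands equal to $0 \in S_\lambda$) combined with the $k=1$ case of (2) and formula \eqref{PropCap} yields $\dim(k\times S_\lambda) \geq \alpha_0$, matching the upper bound $\dim(k\times S_\lambda) \leq \dim(S_\lambda^k) = \alpha_0$. The main technical obstacle is the critical case $\alpha = \alpha_0$ in (2): since $H_{\alpha_0}(S_\lambda^k)=0$ alone does not force $C_{\alpha_0}=0$, the Cauchy--Schwarz lower bound on $\mu\otimes\mu$ combined with the slow divergence provided by the $1/j$ slack in the definition of $K_\lambda^k$ is the essential device that makes the argument work at the critical dimension.
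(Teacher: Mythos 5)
Your proposal is correct, but it reaches the conclusion by a genuinely different route than the paper, most visibly in part (2). For (1) the paper argues by induction on $k$, writing $k\times S_\lambda\subset S_\lambda^{k-1}+S_\lambda$ and showing that adding a single element of $S_\lambda$ to an element of $S_\lambda^{k-1}$ cannot disturb the digit at a position $m\in K_\lambda^k$, because $m$ and $m+1$ both lie in $K_\lambda^{k-1}$ and the tail contribution is strictly less than $2^{-(m+1)}$; your one-pass $k$-fold addition with explicit carry bookkeeping (carries bounded by $k-1$, collapsing to repeated halving across the zero block of length $k$) proves the same thing and is equally valid, provided you justify, as the paper does via the strict tail inequality, that the digit sums are not eventually maximal so that the carries are indeed at most $k-1$ and the expansion you produce is the canonical one. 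For (2) the paper does not estimate energies at all: it verifies that $S_\lambda^k$ is a generalized Cantor set in the sense of Eiderman and Ohtsuka --- computing the interval lengths $l_N\asymp 2^{-2^N(1+\lambda+1/N)}$, the branching numbers $p_N=2^{2^{N+1}-\nu_N}$, and checking the nesting and equidistance of the subintervals --- and then cites their criterion $C_\alpha(S_\lambda^k)=0$ if and only if $\sum_N \bigl((p_{N_0}\cdots p_{N-1})\,l_N^\alpha\bigr)^{-1}=\infty$. Your two-sided energy argument (Cauchy--Schwarz lower bound for an arbitrary probability measure, the natural product measure for the converse) amounts to reproving that criterion in this special case: the series you analyse, $\sum_j 2^{M_j\alpha-F_j}$, is exactly the paper's series, and in both treatments it is the $2^j/j$ slack built into $K_\lambda^k$ that forces divergence at the critical exponent $\alpha_0=\frac{1-\lambda}{1+\lambda}$, which is indeed the delicate point. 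Your route is self-contained and avoids the external references, at the cost of more bookkeeping (in particular, the convergence of $\sum_M 2^{M\alpha-F(M)}$ for $\alpha<\alpha_0$ requires controlling the local maxima of $M\alpha-F(M)$ at the ends of the constrained blocks, and one should say a word about passing between the Fourier-side energy $I_\alpha$ and the Riesz kernel); the paper's route outsources exactly this work to the cited results. Part (3) is deduced the same way in both.
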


\begin{proof}
$(1)$ : We prove this by induction. If $k=1$ we have $S_\lambda = S_\lambda^1$. We suppose the result true for $k-1$  for some $k\geq 2$ and we will show $k \times S_\lambda \subset S_\lambda^k$. We have $k \times S_\lambda \subset (k-1) \times S_\lambda + S_\lambda \subset S_\lambda^{k-1} + S_\lambda$. Let $x \in S_\lambda^{k-1}$, $y \in S_\lambda$ and $z=x+y$. Denote by $(x_i)$, $(y_i)$ and $(z_i)$ their binary decomposition. Let $m \in K_\lambda^k$. There exists $j \in \N$ such that $m \in [2^j,2^j(1+\lambda+1/j)-k+1]$. Since $m \in K_\lambda^k$, $m$ and $m+1$ are contained in $K_\lambda^{k-1} \subset K_\lambda$, we have $x_m=y_m=x_{m+1}=y_{m+1}=0$. Therefore we write
$$z=x+y = \sum_{i=0}^{m-1} \frac{x_i+y_i}{2^{i+1}} + \sum_{i=m+2}^\infty \frac{x_i+y_i}{2^{i+1}}.$$
Note that for infinitely many  $i \geq m+2$, $x_{i}+y_{i} < 2$, so we see that
$$\sum_{i=m+2}^\infty \frac{x_i+y_i}{2^{i+1}} < \frac{1}{2^{m+1}}.$$
Therefore, we obtain by uniqueness of the decomposition that $z_m=0$. This proves that $x+y \in S_\lambda^k$ and $k \times S_\lambda \subset S_\lambda^k$.

\bigskip
$(2)$ : We will study the capacity of $S_\lambda^k$ by decomposing it.
First we show that the set $S_\lambda^k$ is a generalized Cantor set in the sense of \cite{EID, OHT}. Let $\nu_j =[2^j(1+\lambda+1/j)-k+1]+1$ and $N_0$ (depending only on $k$ and $\lambda$) such that for all $j \geq N_0$, $2^j < \nu_j < 2^{j+1}$. We set for $N \geq N_0$,
$$l_N = \sum_{j=N}^\infty \frac{1}{2^{\nu_j}}-\frac{1}{2^{2^{j+1}}}.$$
Since $2^j(1+\lambda+1/j)-k+1 < \nu_j \leq 2^j(1+\lambda+1/j)-k+2$, we have
\begin{multline*}
\sum_{j=N}^\infty  \frac{1}{2^{2^j(1+\lambda+\frac{1}{j})}} \Big( \frac{1}{2^{2-k}} - \frac{1}{2^{2^j(1-\lambda-\frac{1}{j})}} \Big) \leq l_N \\ \leq \sum_{j=N}^\infty  \frac{1}{2^{2^j(1+\lambda+\frac{1}{j})}} \Big( \frac{1}{2^{1-k}} - \frac{1}{2^{2^j(1-\lambda-\frac{1}{j})}} \Big)
\end{multline*}
On one hand, there exists $C \geq 1$ such that for all  $j \geq N$, 
$$\frac{1}{C} \leq \frac{1}{2^{2-k}} - \frac{1}{2^{2^j(1-\lambda-\frac{1}{j})}} \leq \frac{1}{2^{1-k}} - \frac{1}{2^{2^j(1-\lambda-\frac{1}{j})}} \leq C.$$
On the other hand, for $N \geq N_0$,
\begin{eqnarray*}
\frac{1}{2^{2^N(1+\lambda+\frac{1}{N})}} \leq \sum_{j=N}^\infty  \frac{1}{2^{2^j(1+\lambda+\frac{1}{j})}} & \leq & \frac{1}{2^{2^N(1+\lambda+\frac{1}{N})}} + \sum_{j=0}^\infty \left(  \frac{1}{2^{2^{N+1}(1+\lambda)}} \right)^{2^j}\\
& \leq & \frac{1}{2^{2^N(1+\lambda+\frac{1}{N})}} + \sum_{j=0}^\infty \left(  \frac{1}{2^{2^{N+1}(1+\lambda)}} \right)^{j+1}\\
& \leq & \frac{1}{2^{2^N(1+\lambda+\frac{1}{N})}} +  \frac{2}{2^{2^{N+1}(1+\lambda)}}\\
& \leq & \frac{3}{2^{2^N(1+\lambda+\frac{1}{N})}}.
\end{eqnarray*}
Hence we obtain that $l_N$ is comparable to $2^{-2^N(1+\lambda+1/N)}$, that is:
\begin{equation} \label{lNCompare}
\frac{1}{C 2^{2^N(1+\lambda+\frac{1}{N})}} \leq l_N  \leq  \frac{3C}{2^{2^N(1+\lambda+\frac{1}{N})}}.
\end{equation}
Moreover we have
\begin{equation} \label{lNestimation}
l_N = \frac{1}{2^{\nu_N}} - \sum_{j=N+1}^\infty \frac{1}{2^{2^{j}}} - \frac{1}{2^{\nu_j}} < \frac{1}{2^{\nu_N}} \leq \frac{1}{2^{2^N}}.
\end{equation}
We set 
$$E_N = \left\{\sum_{i=0}^{2^N-1} \frac{x_i}{2^{i+1}} + l_N z,~ z \in [0,1],~ x_i \in \{0,1\},~ i \in K_\lambda^k \Rightarrow x_i=0 \right\}.$$
We can see $E_N$ as a union of disjoint intervals by writing
$$E_N = \bigcup_{\substack{(x_i) \in \{0,1\}^{2^N} \\ i \in K_\lambda^k \Rightarrow x_i=0}} E_N^{(x_i)},$$
where $$E_N^{(x_i)} = \sum_{i=0}^{2^N-1} \frac{x_i}{2^{i+1}} + l_N [0,1].$$
Note that the intervals $E_N^{(x_i)}$ are disjoint since by \eqref{lNestimation}, $l_N < \frac{1}{2^{2^N}}$.
For fixed $N \geq N_0$, let $(x_i)_{0 \leq i \leq 2^N-1} \in \{0,1\}^{2^N}$ and $(y_i)_{0 \leq i \leq 2^{N+1}-1} \in \{0,1\}^{2^{N+1}}$.\\
Claim : $E_{N+1}^{(y_i)} \subset E_{N}^{(x_i)}$ if and only if $x_i=y_i$ for all $0 \leq i < 2^N$ and $y_i=0$ for all $2^N \leq i < \nu_N$.\\
Indeed, suppose that $E_{N+1}^{(y_i)} \subset E_{N}^{(x_i)}$ and let $u \in E_{N+1}^{(y_i)}$. We have
$$u = \sum_{i=0}^{2^N-1} \frac{x_i}{2^{i+1}} + l_N z_1 = \sum_{i=0}^{2^{N+1}-1} \frac{y_i}{2^{i+1}} + l_{N+1} z_2,$$
where $z_1$ and $z_2$ are in $[0,1]$. By \eqref{lNestimation}, $l_N < \frac{1}{2^{\nu_N}}$, and using the uniqueness of the binary representation, we obtain $x_i=y_i$ for all $0 \leq i < 2^N$ and $y_i=0$ for all $2^N \leq i < \nu_N$.\\
Now suppose $x_i=y_i$ for all $0 \leq i < 2^N$ and $y_i=0$ for all $2^N \leq i < \nu_N$. Let $u \in E_{N+1}^{(y_i)}$. We write
$$u = \sum_{i=0}^{2^N-1} \frac{x_i}{2^{i+1}} +  \sum_{i=\nu_N}^{2^{N+1}-1} \frac{y_i}{2^{i+1}} + l_{N+1} z,$$
where $z \in [0,1]$. Note that 
\begin{equation}\label{equidistant}
 \sum_{i=\nu_N}^{2^{N+1}-1} \frac{1}{2^{i+1}} + l_{N+1} = \frac{1}{2^{\nu_N}}-\frac{1}{2^{2^{N+1}}} + l_{N+1} = l_N.
\end{equation}
So we have
$$ \sum_{i=0}^{2^N-1} \frac{x_i}{2^{i+1}} \leq  \sum_{i=0}^{2^N-1} \frac{x_i}{2^{i+1}} +  \sum_{i=Z_N}^{2^{N+1}-1} \frac{y_i}{2^{i+1}} + l_{N+1} z \leq \frac{1}{2} \sum_{i=0}^{2^N-1} \frac{x_i}{2^i} + l_N,$$
and $u \in E_{N}^{(x_i)}$. This conclude the proof of the claim.

\bigskip
By the claim, for fixed $(x_i)$ and  for $N \geq N_0$, we have the following properties :
\begin{enumerate}[(i)]
\item the interval $E_{N}^{(x_i)}$ contains precisely 
$$p_N = \# \{ (y_i)_{\nu_N \leq i \leq 2^{N+1}-1}\text{: } y_i \in \{0,1\} \} = 2^{2^{N+1}-\nu_N}$$ intervals of the form $E_{N+1}^{(y_i)},$
\item the intervals of the form $E_{N+1}^{(y_i)}$ contained in $E_{N}^{(x_i)}$ are equidistant intervals of length $l_{N+1}$:  the distance of two consecutive intervals of the form $E_{N+1}^{(y_i)}$  is equal to  $\frac{1}{2^{2^{N+1}}-l_{N+1}}$, 
\item if we denote $E_{N}^{(x_i)}=[a,b]$ then there exist $(y_{i})$ and $(z_{i})$ such that $E_{N+1}^{(y_i)}=[a,a+l_{N+1}]$ and $E_{N+1}^{(z_i)}=[b-l_{N+1},b]$.
\end{enumerate}
Finally we can write $S_\lambda^k$ as
$$S_\lambda^k=\bigcap_{  N \geq N_0} E_N.$$
This shows that $S_\lambda^k$ is a generalized Cantor set in the sense of \cite{EID,OHT}. So, by \cite{EID,OHT}, we have for $0<\alpha<1$ that $C_\alpha(S_\lambda^k)=0$ if and only if
$$\sum_{N=N_0}^\infty \frac{1}{(p_{N_0} \cdots p_{N-1}) l_N^\alpha} = \infty.$$
Since
\begin{multline*}
2^{(k-2)(N-N_0) + (2^N-2^{N_0})(1-\lambda) - \sigma_N} \leq p_{N_0} \cdots p_{N-1} \\ \leq  2^{(k-1)(N-N_0) + (2^N-2^{N_0})(1-\lambda) - \sigma_N},
\end{multline*}
where
$$\sigma_N = \sum_{j=N_0}^{N-1} \frac{2^j}{j},$$
we have, by \eqref{lNCompare}, $C_\alpha(S_\lambda^k)=0$ if and only if
$$\sum_{N=N_0}^\infty 2^{2^N (\alpha (1+\lambda) - (1-\lambda)) + \alpha 2^N/N + \sigma_N - (k-1)(N-N_0) + 2^{N_0}(1-\lambda)} = \infty.$$
Therefore $C_\alpha(S_\lambda^k)=0$ if and only if $\alpha \geq \frac{1-\lambda}{1+\lambda}$. 

\bigskip
$(3)$ immediately follows from $(1)$ and $(2)$ by the capacity property \eqref{PropCap}.
\end{proof}

We are ready to prove Theorem A. The following Theorem is a reformulation of Theorem A in $A^p_\beta(\T)$ spaces.

\begin{theo} \label{ThAA}
Let $1 < p < 2$, $\beta > 0$ such that $\beta q \leq 1$.
\begin{enumerate}
\item If  $f \in A^1_\beta(\T)$ and $\dim(\cZ(f)) < \frac{2}{q}(1 - \beta q)$ then $f$ is cyclic in $A^p_\beta(\T)$.
\item If  $f \in A^1_\beta(\T)$ and $C_{1-\beta q}(\cZ(f)) > 0$ then $f$ is not cyclic in $A^p_\beta(\T)$.
\item For $\frac{2}{q}(1 - \beta q) < \alpha \leq 1$, there exists a closed set $E \subset \T$ such that $\dim(E)=\alpha$ and every $f \in A^1_\beta(\T)$ satisfying $\cZ(f)=E$ is not cyclic in $A^p_\beta(\T)$.
\item Let $k=[q/2]$. For all $\varepsilon > 0$, there exists a closed set $E\subset \T$ such that 
$$\dim(E) \geq \max \left(  \frac{2}{q} (1 - \beta q )k - \varepsilon,~ 1-2(k+1)\beta \right)$$
and such that every $f \in A^1_\beta(\T)$ satisfying $\cZ(f) = E$ is cyclic in $A^p_\beta(\T)$.\\
Furthermore, if $p = \frac{2k}{2k-1}$ for some $k \in \N^*$, $E$ can be chosen such that $\dim(E)=1-\beta q$.
\end{enumerate}
\end{theo}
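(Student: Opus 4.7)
Parts (1) and (2) follow immediately from the auxiliary results. For (1), the characterization \eqref{PropCap} of Hausdorff dimension by capacity yields some $\alpha<\frac{2}{q}(1-\beta q)$ with $C_\alpha(\cZ(f))=0$; since $1<p<2$ entails $q>2$, the case $k=1$ of Theorem \ref{ThNewman1}(a) yields cyclicity. For (2), the hypothesis $C_{1-\beta q}(\cZ(f))>0$ furnishes a probability measure $\mu$ supported in a compact subset of $\cZ(f)$ with $\sum_{n\geq 1}|\widehat{\mu}(n)|^2(1+|n|)^{-\beta q}<\infty$. Because $|\widehat{\mu}(n)|\leq 1$ and $q>2$, one has $|\widehat{\mu}(n)|^q\leq|\widehat{\mu}(n)|^2$, whence $\mu\in A^q_{-\beta}(\T)$; being a nonzero measure supported in $\cZ(f)$, Proposition \ref{thcycl}(2) shows $f$ is not cyclic.

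For (3), the plan is to combine Salem's Theorem \ref{Salem} with Proposition \ref{thcycl}(2). Given $\alpha\in(\frac{2}{q}(1-\beta q),1]$, I need to choose $q'>2/\alpha$ (so that Salem produces a positive measure $\mu\in A^{q'}(\T)\setminus\{0\}$ supported in some $E$ with $\dim(E)=\alpha$) for which the inclusion $A^{q'}(\T)\subset A^q_{-\beta}(\T)$ holds. When $\alpha>2/q$, any $q'\in(2/\alpha,q]$ works by Lemma \ref{inclusionAp}(1) and $\beta\geq 0$; when $\alpha\leq 2/q$, Lemma \ref{inclusionAp}(2) demands $1/q'>1/q-\beta$, and the combined constraints $1/q-\beta<1/q'<\alpha/2$ define a nonempty interval precisely because $\alpha>\frac{2}{q}(1-\beta q)$. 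The resulting $\mu$ lies in $A^q_{-\beta}(\T)\cap\MT\setminus\{0\}$ with $\supp(\mu)\subset E=\cZ(f)$, so Proposition \ref{thcycl}(2) gives non-cyclicity.

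Part (4) is the most technical and is where I expect the real work. The key construction is the Cantor-type set $S_\lambda$ of Lemma \ref{dimHauss}, whose Hausdorff dimension equals $\frac{1-\lambda}{1+\lambda}$ (tunable throughout $(0,1)$) and for which Lemma \ref{dimHauss}(3) provides $C_{\frac{1-\lambda}{1+\lambda}}(k\times S_\lambda)=0$ for every $k\geq 1$. To attain $\dim(E)\geq\frac{2}{q}(1-\beta q)k-\varepsilon$ with $k=[q/2]$, I pick $\lambda$ such that $\frac{1-\lambda}{1+\lambda}$ lies strictly below and within $\varepsilon$ of $\frac{2}{q}(1-\beta q)k$, set $E=S_\lambda$, and apply Theorem \ref{ThNewman1}(a): the hypothesis $k\leq q/2$ is automatic, and $C_{\frac{1-\lambda}{1+\lambda}}(k\times S_\lambda)=0$ supplies the required capacity bound at $\alpha=\frac{1-\lambda}{1+\lambda}<\frac{2}{q}(1-\beta q)k$. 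When $1-2(k+1)\beta>0$ exceeds the previous bound, I instead take $E=S_\mu$ with $\frac{1-\mu}{1+\mu}=1-2(k+1)\beta$ and apply Theorem \ref{ThNewman1}(b) with $k'=k+1$: since $k=[q/2]$ one has $q/2\leq k+1$; since $1-2(k+1)\beta>0$ one has $k+1\leq 1/(2\beta)$; and $C_{1-2(k+1)\beta}((k+1)\times S_\mu)=0$ again by Lemma \ref{dimHauss}(3).

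For the special case $p=\frac{2k}{2k-1}$ we have $q=2k$, so $[q/2]=k=q/2$ and the two bounds in (4) collapse to $1-\beta q$. Here Theorem \ref{ThNewman1}(b) is invoked with $k'=k$ itself: the condition $q/2\leq k$ holds with equality and $k\leq 1/(2\beta)$ is equivalent to $\beta q\leq 1$. Choosing $E=S_\mu$ with $\frac{1-\mu}{1+\mu}=1-\beta q$ yields $\dim(E)=1-\beta q$ exactly, with $C_{1-\beta q}(k\times S_\mu)=0$ from Lemma \ref{dimHauss}(3). The main obstacle, and the reason we cannot reach $\dim(E)=\frac{2}{q}(1-\beta q)k$ in general, is precisely the gap between $[q/2]$ and $q/2$: part (a) of Theorem \ref{ThNewman1} requires $k\leq q/2$ with strict inequality in $\alpha$, whereas part (b) requires $k\geq q/2$, so only when $q\in 2\N^*$ do these halves combine to reach the limiting dimension.
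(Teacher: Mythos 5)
Your proposal is correct and follows essentially the same route as the paper: part (2) via the energy/capacity argument and Proposition \ref{thcycl}(2), part (3) via Salem's theorem combined with the inclusion $A^{q'}(\T)\subset A^q_{-\beta}(\T)$ from Lemma \ref{inclusionAp}, and part (4) via the sets $S_\lambda$ of Lemma \ref{dimHauss} fed into Theorem \ref{ThNewman1}(a) and (b) exactly as in the paper, including the observation that $p=\tfrac{2k}{2k-1}$ makes $[q/2]=q/2$ and closes the gap at $1-\beta q$. The only (immaterial) deviation is in part (1), where you invoke Theorem \ref{ThNewman1}(a) with $k=1$ rather than combining Lemma \ref{KahaneSupp} with Proposition \ref{thcycl}(1) directly; both rest on the same capacity characterization \eqref{PropCap}.
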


\begin{proof}
$(1)$ : Note that, by \eqref{PropCap}, $\dim(\cZ(f)) < \frac{2}{q}(1 - \beta q)$ if and only if there exists $\alpha < \frac{2}{q}(1 - \beta q)$ such that $C_\alpha(\cZ(f)) = 0$. If $C_\alpha(\cZ(f)) = 0$, by Lemma \ref{KahaneSupp}, there is no $S \in A^q_{-\beta}(\T) \setminus \{0\}$ such that $\supp(S) \subset \cZ(f)$ . So, by Proposition \ref{thcycl} (1), $f$ is cyclic in $A^p_\beta(\T)$.

\bigskip \noindent
$(2)$ : Suppose that $C_{1-\beta q}(\cZ(f)) > 0$. There exists a probability measure $\mu$ of energy $I_{1-\beta q}(\mu) < \infty$, such that $\supp(\mu) \subset \cZ(f)$ . So $\mu \in A^2_{-\beta q/2}(\T) \setminus \{ 0 \}$. Since $|\widehat{\mu}(n)| \leq 1$ for all $n \in \Z$
and $q \geq 2$, we have $\mu \in A^q_{-\beta}(\T)$. By proposition \ref{thcycl} (2), $f$ is not cyclic in $A^p_\beta(\T)$.

\bigskip \noindent
$(3)$ : Let $\frac{2}{q}(1 - \beta q) < \alpha \leq 1$. There exists $\varepsilon >0$ such that $\frac{2}{q}(1 - \beta q) + \varepsilon < \alpha$. Let $q'$ such that $\frac{2}{q}-2\beta+\varepsilon = \frac{2}{q'}$. Since $\beta > \frac{1}{q}-\frac{1}{q'}$, by Lemma \ref{inclusionAp},  $A^{q'}(\T) \subset A^q_{-\beta}(\T)$. 
By Theorem \ref{Salem}, as $q'$ satisfies $q' > \frac{2}{\alpha}$, there exists a closed subset $E \subset \T$ such that $\dim(E)=\alpha$ and a non zero positive measure $\mu \in A^{q'}(\T) \subset A^q_{-\beta}(\T)$ such that $\supp(\mu) \subset E$. Now $(3)$ follows from proposition \ref{thcycl}.$(2)$.

\bigskip \noindent
$(4)$ : Let $k=[q/2]$. Suppose first $\frac{2}{q} (1 - \beta q )k > 1-2(k+1)\beta$ and let $0<\varepsilon'<\varepsilon$ satisfying $1-2(k+1)\beta \leq \frac{2}{q} (1 - \beta q )k - \varepsilon'$. Consider the set $S_\lambda$ where $\lambda$ verifies $$\frac{2}{q} (1 - \beta q )k - \varepsilon' < \frac{1-\lambda}{1+\lambda} < \frac{2}{q} (1 - \beta q )k.$$
By Lemma \ref{dimHauss}.(3) we have $\dim(S_\lambda) = \frac{1-\lambda}{1+\lambda}$ and $C_{\frac{1-\lambda}{1+\lambda}}(k \times S_\lambda)=0$. Therefore by Theorem \ref{ThNewman1}.$(a)$, every $f \in A^1_\beta(\T)$ such that $\cZ(f) = S_\lambda$ is cyclic in $A^p_\beta(\T)$.

Now suppose $\frac{2}{q} (1 - \beta q )k \leq 1-2(k+1)\beta$. We consider $S_\lambda$ where $\frac{1-\lambda}{1+\lambda} = 1-2(k+1)\beta$. By lemma \ref{dimHauss}.(3) we have $\dim(S_\lambda)=\frac{1-\lambda}{1+\lambda}=1-2(k+1)\beta$ and $C_{\frac{1-\lambda}{1+\lambda}}((k+1) \times S_\lambda)=0$. So by Theorem \ref{ThNewman1}.$(b)$, every $f \in A^1_\beta(\T)$ such that $\cZ(f) = S_\lambda$ is cyclic in $A^p_\beta(\T)$.

\bigskip
Suppose now that $p = \frac{2k}{2k-1}$ for some $k \in \N^*$. As before, we consider $S_\lambda$ where $\frac{1-\lambda}{1+\lambda} = 1-2k\beta = 1-\beta q$. So again by Theorem \ref{ThNewman1}.$(b)$, every $f \in A^1_\beta(\T)$ such that $\cZ(f) = S_\lambda$ is cyclic in $A^p_\beta(\T)$.
\end{proof}

Note that the set $E$ which is considered in \ref{ThAA}.(4) verifies $C_{\alpha}(E)=0$ where 
$$\alpha \geq \max \left(  \frac{2}{q} (1 - \beta q )k - \varepsilon,~ 1-2(k+1)\beta \right).$$

\section{Proof of Theorem B}

In this section we investigate the sharpness of the constant $\frac{2}{q} (1 - \beta q )$ in Theorem A.

Before proving Theorem B, we need the following two results.
The following Lemma is an extension of Newman's Lemma 3 (see \cite{NEW} pp 654-655).
\begin{lemme} \label{lemme3Newman}
Let $p \in [1,2[$, $\beta \geq 0$ such that $\beta q \leq 1$. There exists $C > 0$ such that 
for all $f \in A^2_1(\T)$,
$$\|f\|_{A^p_\beta(\T)} \leq C^{\frac{1}{p}} \|f\|_{A^2(\T)}^{\frac{3}{2}-\frac{1}{p}-\beta} (\|f\|_{A^2(\T)} + \|f'\|_{A^2(\T)})^{\frac{1}{p}-\frac{1}{2}+\beta}.$$
\end{lemme}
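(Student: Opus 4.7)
The plan is to adapt Newman's approach for the unweighted case $\beta=0$ (see \cite[Lemma~3]{NEW}): split the sum defining $\|f\|_{A^p_\beta(\T)}^p$ at a threshold $N\geq 1$, apply H\"older's inequality with conjugate exponents $2/p$ and $2/(2-p)$ on each half, and then optimize over $N$.

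For the low-frequency block $|n|\leq N$, I would factor $|\widehat{f}(n)|^p=(|\widehat{f}(n)|^2)^{p/2}$ and push the weight onto the complementary factor, obtaining
\[
\sum_{|n|\leq N}|\widehat{f}(n)|^p(1+|n|)^{p\beta}\leq \|f\|_{A^2(\T)}^p\Bigl(\sum_{|n|\leq N}(1+|n|)^{\frac{2p\beta}{2-p}}\Bigr)^{(2-p)/2}\lesssim \|f\|_{A^2(\T)}^p\,N^{p\beta+1-p/2}.
\]
For the high-frequency tail $|n|>N$, I would first pull a factor $(1+|n|)^p$ out of $|\widehat{f}(n)|^p(1+|n|)^{p\beta}$ so that the $\ell^{2/p}$ part of the subsequent H\"older bound is controlled by $\|f\|_{A^2_1(\T)}^p$; the complementary factor becomes $\sum_{|n|>N}(1+|n|)^{\frac{2p(\beta-1)}{2-p}}$. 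This is exactly the step where the hypothesis $\beta q\leq 1$ is used: it forces $\beta\leq (p-1)/p$, hence the exponent $\frac{2p(\beta-1)}{2-p}$ is at most $-\frac{2}{2-p}<-1$, so the tail converges and is $\lesssim N^{p\beta+1-3p/2}$.

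Combining the two estimates and noting that the $N$-exponents differ by exactly $-p$, I would choose $N\asymp \|f\|_{A^2_1(\T)}/\|f\|_{A^2(\T)}$, which is automatically $\geq 1$, to balance them. This yields
\[
\|f\|_{A^p_\beta(\T)}^p\lesssim \|f\|_{A^2(\T)}^{\,3p/2-p\beta-1}\,\|f\|_{A^2_1(\T)}^{\,p\beta+1-p/2}.
\]
Taking $p$-th roots and using the elementary comparison $\|f\|_{A^2_1(\T)}\asymp \|f\|_{A^2(\T)}+\|f'\|_{A^2(\T)}$ then recovers the stated inequality, with a constant depending only on $p$ and $\beta$. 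The only delicate point is the exponent bookkeeping: one must arrange the H\"older split precisely so that the Sobolev weight $(1+|n|)^p$ and the target weight $(1+|n|)^{p\beta}$ combine to leave a convergent tail, and verify that the hypothesis $\beta q\leq 1$ guarantees this strict convergence even at the boundary $\beta q=1$. Once the exponents are organised as above, the remainder is a standard interpolation-by-optimisation argument.
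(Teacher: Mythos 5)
Your proposal is correct and follows essentially the same route as the paper: the paper also splits the sum at the threshold $y=\bigl(\sum n^2|c_n|^2\bigr)^{1/2}/\bigl(\sum|c_n|^2\bigr)^{1/2}$ (i.e.\ your optimal $N$), applies H\"older's inequality with exponents $2/p$ and $2/(2-p)$ on each block, and uses $\beta q\leq 1$ exactly as you do to make the tail exponent $\frac{2p(\beta-1)}{2-p}<-1$. The only cosmetic difference is that the paper substitutes the balancing value of the threshold from the outset instead of optimizing over a free parameter $N$.
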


\begin{proof}
It suffices to show that there exists $C > 0$ such that for all sequences $(c_n) \in \C^{\N^*}$,
$$\sum_{n = 1}^\infty |c_n|^p (1+|n|)^{p\beta} \leq C \left( \sum_{n = 1}^\infty |c_n|^2 \right)^{\frac{3}{4}p-\frac{1}{2}-\frac{\beta p}{2}} \left( \sum_{n = 1}^\infty n^2|c_n|^2 \right)^{\frac{1}{2}-\frac{p}{4}+\frac{\beta p}{2}}.$$
Then we apply this inequality to $(\widehat{f}(n))_{n \geq 1}$ and $(\widehat{f}(-n))_{n \geq 1}$.
Let $x^2 = \sum_{n \geq 1} |c_n|^2$ and $x^2y^2 = \sum_{n \geq 1} n^2|c_n|^2$. Note that $y \geq 1$. On one hand, by the H\"older inequality,
\begin{eqnarray*}
\sum_{1 \leq n \leq y} |c_n|^p (1+n)^{p\beta} & \leq & \left( \sum_{n=1}^y |c_n|^2 \right)^{p/2} \left( \sum_{n=1}^y (1+n)^{\frac{2p\beta}{2-p}} \right)^{1-p/2}\\
& \leq &  \left( x^2 \right)^{\frac{p}{2}} \left( y (1+y)^{\frac{2p\beta}{2-p}} \right)^{1-\frac{p}{2}} \leq 2^{\beta p} x^p y^{1-\frac{p}{2}+p\beta}.
\end{eqnarray*}
On the other hand we set $\gamma = \frac{2p(1-\beta)}{2-p}$. Since $\beta q \leq 1$, $\gamma > 1$ and again by the H\"older inequality we obtain,
\begin{eqnarray*}
\sum_{n > y} |c_n|^p (1+n)^{p\beta} &\leq & 2^p \left( \sum_{n > y}^\infty n^2|c_n|^2 \right)^{\frac{p}{2}} \left( \sum_{n > y}^\infty (1+n)^{\frac{2p(\beta-1)}{2-p}} \right)^{1-\frac{p}{2}} \\
& \leq & 2^{ p} \left( x^2y^2 \right)^{\frac{p}{2}} ~ \left( \frac{1}{\gamma-1} \right)^{1-p/2} \left( y^{1-\gamma} \right)^{1-\frac{p}{2}} \\
&\leq & 2^{p} \left( \frac{1}{\gamma-1} \right)^{1-p/2} x^p y^{1-\frac{p}{2}+p\beta}
\end{eqnarray*}
So the conclusion of the Lemma holds with $$C = \max\left( 2^{\beta p}, 2^{p} \left( \frac{1}{\gamma-1} \right)^{1-p/2}\right)$$ which is a positive constant depending only on p and $\beta$.
\end{proof}

The following theorem is due to K\"orner (see \cite[Theorem 1.2]{KOR}).

\begin{theo} \label{Korner}
Let $h : [0,\infty) \rightarrow [0,\infty)$ be an increasing continuous function with $h(0)=0$ and let $\phi : [0,\infty) \rightarrow [0,\infty)$ be a decreasing function.
Suppose that
\begin{enumerate}
\item $\int_1^\infty \phi(x)^2 dx = \infty$;
\item there exist $K_1,K_2 > 1$ such that for all $1 \leq x \leq y \leq 2x$, $$K_1 \phi(2x) \leq \phi(x) \leq K_2 \phi(y) ;$$
\item there exists $\gamma > 0$ such that
$$\lim_{x \to \infty} x^{1-\gamma} \phi(x) = \infty ;$$
\item there exist $0 < K_2 < K_3 < 1$ such that for all $t > 0$, 
$$K_2 h(2t) \leq h(t) \leq K_3 h(2t).$$
\end{enumerate}
Then there exists a probability measure $\mu$ with support of Hausdorff $h$-measure zero such that 
$$|\widehat{\mu}(n)| \leq \phi \left(\frac{1}{h(|n|^{-1})} \right) \left(\ln \left(\frac{1}{h(|n|^{-1})} \right)\right)^{1/2}, \qquad \forall n \neq 0.$$
\end{theo}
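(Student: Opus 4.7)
The plan is to construct $\mu$ by a randomized Cantor-type procedure of Salem--Kahane type, adapted to the general gauge function $h$. I would first choose sequences of integers $(M_n)_{n\ge 1}$ and lengths $(\ell_n)_{n\ge 0}$ with $\ell_0=1$ and $M_n\ell_n<\ell_{n-1}$, and build nested compact sets $E_n\subset E_{n-1}\subset\T$, where $E_n$ is a disjoint union of $N_n:=M_1\cdots M_n$ closed intervals of common length $\ell_n$. Within each component of $E_{n-1}$, the $M_n$ subintervals of $E_n$ are placed with independent uniformly-random left endpoints. Set $E:=\bigcap_n E_n$ and let $\mu$ be the weak-$*$ limit of the normalized uniform measures $\mu_n:=(N_n\ell_n)^{-1}\mathbf{1}_{E_n}\,dx$.

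Next I would compute $\widehat{\mu}(k)$ by an inductive formula in $n$. This expresses $\widehat{\mu_n}(k)$ as $\widehat{\mu_{n-1}}(k)$ multiplied by an average $\Xi_n(k)=M_n^{-1}\sum_{j=1}^{M_n}e^{-ikX_{n,j}}$ of independent uniform random exponentials, together with a deterministic sinc-type factor coming from the length $\ell_n$. Using the Salem--Kahane sub-Gaussian bound $\mathbb{E}\,|\Xi_n(k)|^{2m}\le (Cm/M_n)^m$ (valid as long as $M_n$ is not too small compared with $m$) and multiplying across levels up to the resonant stage $n(k)$ defined by $\ell_{n(k)}|k|\asymp 1$, one obtains a sub-Gaussian concentration of $|\widehat{\mu}(k)|$ with scale $N_{n(k)}^{-1/2}$, the off-resonant factors being controlled by the decay of the sinc.

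The parameters are then calibrated so that at the resonant scale $1/h(\ell_{n(k)})\asymp 1/h(|k|^{-1})$ (this uses the doubling hypothesis (4) on $h$) and $N_{n(k)}\asymp \phi(1/h(|k|^{-1}))^{-2}$ (this uses the doubling hypothesis (2) on $\phi$). Hypothesis (1) guarantees that $N_n\to\infty$, so the construction does not collapse to an atomic measure, and hypothesis (3) provides the polynomial lower bound on $\phi$ needed to make a Chebyshev estimate summable in $k$. A Chebyshev-plus-union-bound argument with moment $m$ chosen proportional to $\log|k|$ then yields that with positive probability
\[
|\widehat{\mu}(k)|\le \phi\bigl(1/h(|k|^{-1})\bigr)\bigl(\log(1/h(|k|^{-1}))\bigr)^{1/2}
\]
for every $k\ne 0$ simultaneously, the $(\log)^{1/2}$ factor being exactly the price of the union bound. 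The Hausdorff statement is then easy: $E$ is covered by $N_n$ intervals of length $\ell_n$, so $H_h(E)\le \liminf_n N_n h(\ell_n)$, which vanishes by the chosen parameters.

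The main obstacle will be the simultaneous calibration of $(M_n,\ell_n)$: the sequences must be tuned so that $N_n h(\ell_n)\to 0$ (for $H_h(E)=0$), while $N_n\,\phi(1/h(\ell_n))^2$ remains bounded (for the Fourier bound), and every integer $k$ lies close to some resonant scale (so the bound extends to all $k\ne 0$, not merely those near some $\ell_n^{-1}$). The four hypotheses (1)--(4) are precisely what make these constraints compatible: (4) and (2) give doubling so that the resonant scale is well-defined up to constants; (1) ensures a measure exists at all; (3) secures enough polynomial room for a convergent union bound. Organizing the proof so that this compatibility is transparent, and controlling the off-resonant contributions at levels $n\ne n(k)$, is the technical heart of the argument.
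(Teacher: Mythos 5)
You should first note that the paper does not prove this statement at all: it is quoted as K\"orner's theorem (\cite[Theorem 1.2]{KOR}, the Iva\v{s}ev-Musatov III paper), so the only thing to assess is your sketch on its own merits, and its central calibration step has a genuine gap. Your plan requires simultaneously (i) $N_n h(\ell_n)\to 0$, to get $H_h(E)=0$ (and for a homogeneous Cantor measure this is essentially forced, not optional: if $N_n\gtrsim 1/h(\ell_n)$ for all $n$ then $\mu(I)\lesssim h(|I|)$ and the mass-distribution principle gives $H_h(E)>0$), and (ii) $N_{n(k)}\asymp \phi\bigl(1/h(|k|^{-1})\bigr)^{-2}$ at the resonant scale $\ell_{n(k)}\asymp 1/|k|$. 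Point (ii) is also forced in your model, not merely convenient: since $|E_n|=N_n\ell_n\to 0$, convolving $\mu$ with a bump of width $\ell_n$ and applying Plancherel shows $\max_{0<|k|\lesssim 1/\ell_n}|\widehat{\mu}(k)|\gtrsim N_n^{-1/2}$, so the desired bound at resonant frequencies needs $N_n\gtrsim \phi(1/h(\ell_n))^{-2}$ up to logarithms. Writing $x_n=1/h(\ell_n)$, (i) and (ii) are compatible only if $x\phi(x)^2\to\infty$ along the chosen scales. But hypotheses (1)--(4) do not give this: $\phi(x)=(x\ln(ex))^{-1/2}$ satisfies (1)--(3) (with $K_1=\sqrt2$ and any $\gamma<1/2$), yet $x\phi(x)^2=1/\ln(ex)\to 0$; and this is exactly the $\phi$ to which the paper applies the theorem in the proof of Theorem B(2). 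For such $\phi$ your construction yields $N_nh(\ell_n)\gtrsim \ln\bigl(1/h(\ell_n)\bigr)\to\infty$, i.e.\ positive $h$-measure, or else violates the Fourier bound at some resonant $k$. So the sentence ``the four hypotheses (1)--(4) are precisely what make these constraints compatible'' is the unproved step, and it is false for the homogeneous random Cantor model you propose.

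This incompatibility is precisely why Iva\v{s}ev-Musatov/K\"orner-type theorems are hard and why the construction there is not a Salem-type random Cantor set with equal intervals and uniform weights: the divergence $\int_1^\infty\phi^2=\infty$ must be exploited \emph{cumulatively} across many scales (the support is shrunk a little at each of many stages, the total shrinkage being governed by $\int\phi^2$ over the corresponding range, while each stage contributes only an admissible amount to $\widehat{\mu}$), which requires non-uniform, oscillating densities rather than normalized indicators of $N_n$ equal intervals. Relatedly, the role of hypothesis (1) is not ``$N_n\to\infty$'': it is the sharp necessary condition, since $\int\phi^2<\infty$ together with the stated Fourier bound would give $\widehat{\mu}\in\ell^2$, hence $\mu\in L^2(\T)$, impossible for a measure whose support is Lebesgue-null (as it is here by (4)). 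A minor further point: your union bound naturally produces $\sqrt{\ln|k|}$ rather than $\sqrt{\ln(1/h(|k|^{-1}))}$, which is harmless under the doubling condition (4); the fatal issue is the single-scale calibration above, so the proposal as written does not prove the theorem.
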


Recall Theorem B reformulated in $A^p_\beta(\T)$ space.

\begin{theo}
Let $1 < p < 2$ and $\beta \geq 0$ such that $\beta q < 1$.
\begin{enumerate}
\item If $f \in A^1_\beta(\T)$ and $\cZ(f)$ has strong $\alpha$-measure $0$ where $\alpha = \frac{2}{q}(1 - \beta q)$ then $f$ is cyclic in $A^p_\beta(\T)$.
\item For every $\gamma > \frac{2}{q}$, there exists a closed subset $E \subset \T$ such that every $f \in A^1_\beta(\T)$ satisfying $\cZ(f)=E$ is not cyclic in $A^p_\beta(\T)$ and such that $H_h(E)=0$ where $h(t)=\frac{t^\alpha}{\ln(e/t)^\gamma}$ with $\alpha = \frac{2}{q}(1 - \beta q)$.
\end{enumerate}
\end{theo}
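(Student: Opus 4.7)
The plan is to prove the two parts of Theorem B separately, using distinct techniques.

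For part (1), by Lemma \ref{lemme2Newman} it suffices to construct a sequence of Lipschitz functions $(f_n)$ vanishing on $E$ with $\|1 - f_n\|_{A^p_\beta(\T)} \to 0$. I will take $\phi_\eta(x) = \min(1, \dist(x, E)/\eta)$, which is $1/\eta$-Lipschitz and vanishes on $E$. Labeling the complementary intervals of $E$ by $(a_k, b_k)$ in non-increasing order of lengths $\ell_k$, with $r_n$ as in \eqref{defrn} and $N(\eta) := \#\{k : \ell_k > 2\eta\}$, a direct gap-by-gap computation gives $\|1 - \phi_\eta\|_{L^2(\T)}^2 \lesssim |N_\eta(E)|$ and $\|\phi_\eta'\|_{L^2(\T)}^2 \lesssim |N_\eta(E)|/\eta^2$, where $|N_\eta(E)| = 2\eta N(\eta) + r_{N(\eta)}$ is the Lebesgue measure of the $\eta$-neighborhood of $E$. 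Setting $\sigma := 1/p - 1/2 + \beta$, so that $3/2 - 1/p - \beta = 1 - \sigma$, Lemma \ref{lemme3Newman} yields
\[
\|1 - \phi_\eta\|_{A^p_\beta(\T)} \lesssim |N_\eta(E)|^{1/2}\,\eta^{-\sigma}.
\]

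The crucial algebraic identity is $\alpha = 1 - 2\sigma$, where $\alpha = \tfrac{2}{q}(1-\beta q)$; under it, the strong $\alpha$-measure zero hypothesis $r_n n^{1/\alpha - 1} \to 0$ becomes $r_n = o(n^{-2\sigma/\alpha})$. For each $n$, the function $\eta \mapsto (2\eta n + r_n)/\eta^{2\sigma}$ is convex with minimum at $\eta^* = \sigma r_n/(n\alpha)$ and minimum value $C r_n^\alpha n^{2\sigma}$; the strong measure condition then forces $r_n^\alpha n^{2\sigma} = o(1)$. The main technical point is that $\eta^*$ must lie in the range $[\ell_{n+1}/2,\ell_n/2]$ where $N(\eta^*) = n$. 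I will resolve this through a case analysis: the ``below'' case $\eta^* < \ell_{n+1}/2$ is equivalent to $\ell_{n+1} > (2\sigma/\alpha)\, r_n/n$, forcing $r_{n+1} < r_n(1 - 2\sigma/(n\alpha))$; the ``above'' case $\eta^* > \ell_n/2$ forces $r_{n-1} < r_n(1 + 2\sigma/(n\alpha))$. A standard telescoping shows that if the ``above'' case held for every $n \geq n_0$, one would have $r_N \gtrsim N^{-2\sigma/\alpha}$, contradicting strong $\alpha$-measure zero. Handling the remaining obstruction carefully (in the ``below'' case, the recursion combined with the small-$o$ rate forces $\ell_{n+1}$ to be significantly smaller than $r_n$, so that the bound at $\eta = \ell_{n+1}/2$ still yields $o(1)$) produces a subsequence $n_j \to \infty$ along which $\|1 - \phi_{\eta_{n_j}}\|_{A^p_\beta(\T)} \to 0$, and Lemma \ref{lemme2Newman} concludes.

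For part (2), I apply Theorem \ref{Korner} with $h(t) = t^\alpha/\ln(e/t)^\gamma$ as given and $\phi(x) = (x+e)^{-1/2}(\ln(x+e))^{-1/2}$. Conditions (1)--(4) of Körner's theorem are routine: $\int_1^\infty \phi^2 \dd x = \infty$ because the integrand is $\asymp 1/((x+e)\ln(x+e))$; $\phi$ is eventually doubling (ratio tending to $\sqrt 2$); $x^{1-\gamma'}\phi(x) \to \infty$ for any $\gamma' \in (0, 1/2)$; and $h(t)/h(2t) \to 2^{-\alpha} \in (0,1)$ as $t \to 0$. Körner then produces a nonzero probability measure $\mu$ with $H_h(\supp \mu) = 0$ and
\[
|\widehat \mu(n)| \leq \phi\!\bigl(1/h(|n|^{-1})\bigr)\bigl(\ln(1/h(|n|^{-1}))\bigr)^{1/2}.
\]
Since $1/h(1/n) \asymp n^\alpha (\ln n)^\gamma$ and $\ln(1/h(1/n)) \asymp \ln n$, this yields $|\widehat\mu(n)| \lesssim n^{-\alpha/2}(\ln n)^{-\gamma/2}$, and the identity $\alpha q/2 + \beta q = 1$ gives
\[
\sum_{n \neq 0} |\widehat\mu(n)|^q (1+|n|)^{-\beta q} \lesssim \sum_n \frac{1}{n(\ln n)^{\gamma q/2}} < \infty,
\]
since $\gamma > 2/q$ makes $\gamma q/2 > 1$. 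Hence $\mu \in A^q_{-\beta}(\T) \cap \MT \setminus \{0\}$; setting $E = \supp\mu$, Proposition \ref{thcycl}.(2) implies that every $f \in A^1_\beta(\T)$ with $\cZ(f) = E$ is non-cyclic in $A^p_\beta(\T)$, completing part (2). The main obstacle throughout is the careful case analysis at the boundary of the strong measure condition in part (1); part (2) is essentially a direct application of Körner's theorem once the correct $\phi$ is identified.
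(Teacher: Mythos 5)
Your part (2) coincides with the paper's proof: the same application of K\"orner's theorem with $h(t)=t^{\alpha}\ln(e/t)^{-\gamma}$ and $\phi(x)\asymp (x\ln(ex))^{-1/2}$, the same decay $|\widehat{\mu}(n)|\lesssim |n|^{-\alpha/2}\ln(e|n|)^{-\gamma/2}$, the same use of the identity $\alpha q/2+\beta q=1$, and Proposition \ref{thcycl}.(2) at the end. Nothing to change there.

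Part (1) is a genuinely different route, and the difference matters. The paper does not smooth $\dist(\cdot,E)$: it fixes $n$ with $r_n<\varepsilon n^{1-1/\alpha}$ and opens a ramp of width $\varepsilon n^{-1/\alpha}$ inside the $n$ largest complementary intervals only, so the number of smoothed gaps and the smoothing scale are two \emph{independent} parameters; the bound $\|1-\psi\|_{A^p_\beta(\T)}\lesssim \varepsilon^{1-\frac1p-\beta}$ then comes out for every such $n$, with no case analysis at all. Your cutoff $\phi_\eta=\min(1,\dist(\cdot,E)/\eta)$ couples the two parameters through $N(\eta)$, and that coupling is exactly what creates the boundary cases you then have to fight. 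Your treatment of the ``above'' case (telescoping $r_{n-1}<r_n(1+2\sigma/(n\alpha))$ against $r_N=o(N^{1-1/\alpha})$) is fine, but your justification of the ``below'' case is not: the condition $\eta^*<\ell_{n+1}/2$ is a \emph{lower} bound on $\ell_{n+1}$, and ``the recursion forces $\ell_{n+1}$ to be significantly smaller than $r_n$'' is neither the fact you need nor a consequence of that recursion. What you actually need at $\eta=\ell_{n+1}/2$ is $n\,\ell_{n+1}^{\alpha}\to 0$ (since in the below case $n\ell_{n+1}+r_n\lesssim n\ell_{n+1}$), and this does hold, but for a different reason: the gap lengths are non-increasing, so $\tfrac n2\,\ell_{n+1}\le r_{\lfloor n/2\rfloor}=o(n^{1-1/\alpha})$, hence $\ell_{n+1}=o(n^{-1/\alpha})$ and $n\ell_{n+1}^{\alpha}=o(1)$. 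With that substitution your argument does close along the subsequence of $n$ for which the ``above'' case fails, so the route is salvageable; but the extra difficulty is an artifact of the test function, not of the theorem, and adopting the paper's cutoff (distance to the complement of the $n$ largest gaps, with a freely chosen ramp width) removes the case analysis entirely.
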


Note that in $(2)$, $H_h$ is closed to $H_\alpha$.

\begin{proof}
$(1)$ : The proof of this result holds by using arguments analogous to those of Newman for $\beta = 0$ (see \cite[Theorem 1]{NEW}).
Denote by $(a_k,b_k)$ the complementary intervals of $\cZ(f)$ arranged in non-increasing order of lengths and set
$$r_n = 2\pi - \sum_{k=0}^n (b_k-a_k).$$
The set $\cZ(f)$ has strong $\alpha$-measure $0$ where $\alpha = \frac{2}{q}(1 - \beta q)$ so $$\lim_{n \to \infty} ~ r_n ~ n^{\frac{1}{\alpha}-1} = 0.$$
Let $\varepsilon>0$ and $n\in \N$ such that $r_n < \varepsilon n^{1-\frac{1}{\alpha}}$ and $\varepsilon n^{-\frac{1}{\alpha}}<1$.
Let the function  $\psi$ be given by 
$$\psi(x) = \max \Big( 1 - \frac{n^{\frac{1}{\alpha}}}{\varepsilon} \rho(x) , ~0 \Big),\qquad x\in \T,$$
where 
$$\rho(x)=\dist\big(x,\T \setminus \bigcup_{k=1}^n ]a_k,b_k[\big).$$
Then
\begin{multline*}
\|\psi\|_{A^2(\T)}^2 = \int_{\T \setminus \bigcup_{k=1}^n ]a_k,b_k[} \psi(t)^2 \dd t + \sum_{k=1}^n \int_{a_k}^{b_k} \psi(t)^2 \chi_{\{\rho(x) \leq \varepsilon n^{-\frac{1}{\alpha}} \}}(t) \dd t\\
 \leq  r_n + \sum_{k=1}^n 2 \varepsilon n^{-\frac{1}{\alpha}}
 \leq  3 \varepsilon n^{1-\frac{1}{\alpha}}.
\end{multline*}
Moreover
\begin{multline*}
\|\psi'\|_{A^2(\T)}^2 = \int_\T \psi'(t)^2 \dd t
= \sum_{k=1}^n \int_{a_k}^{b_k} \psi'(t)^2 \chi_{\{\rho(x) \leq \varepsilon n^{-\frac{1}{\alpha}} \}}(t) \dd t\\
 \leq  \sum_{k=1}^n \left( \frac{n^{\frac{1}{\alpha}}}{\varepsilon} \right)^2 2 \varepsilon n^{-\frac{1}{\alpha}}
 \leq  2 \frac{n^{1+\frac{1}{\alpha}}}{\varepsilon}.
\end{multline*}
Since $ \varepsilon n^{-\frac{1}{\alpha}} < \frac{n^{\frac{1}{\alpha}}}{\varepsilon}$ and $\alpha = \frac{2}{q}(1 - \beta q)$, by Lemma \ref{lemme3Newman}, 
$$\|\psi\|_{A^p_\beta(\T)} \leq C^{\frac{1}{p}} \left(3 \varepsilon n^{1-\frac{1}{\alpha}} \right)^{\frac{3}{4}-\frac{1}{2p}-\frac{\beta}{2}} \left(5 \frac{n^{1+\frac{1}{\alpha}}}{\varepsilon} \right)^{\frac{1}{2p}-\frac{1}{4}+\frac{\beta}{2}} \leq C' \varepsilon^{1-\frac{1}{p}-\beta}$$
where $C$ and $C'$ depend only on $\beta$ and $p$.  Note that $1-\psi$ is a Lipschitz function and  $\cZ(f) \subset \cZ(1-\psi)$. We conclude by Lemma \ref{lemme2Newman}.

\bigskip \noindent
$(2)$ : Let $\alpha = \frac{2}{q}(1 - \beta q)$ and $\gamma > \frac{2}{q}$. By Theorem \ref{Korner} with $\phi(t) = (t \ln(et))^{-1/2}$ for $t \geq 1$ and $ h(t) = \frac{t^\alpha}{\ln(e/t)^\gamma}$ for $t \in [0,\infty)$, there exists a probability measure $\mu$ with support of Hausdorff $h$-measure zero such that 
$$|\widehat{\mu}(n)| \leq \phi \left(\frac{1}{h(|n|^{-1})} \right) \left(\ln \left(\frac{1}{h(|n|^{-1})} \right)\right)^{1/2} \leq (|n|^{\alpha} \ln(e|n|)^{\gamma})^{-1/2},$$
for $n \neq 0$. So
\begin{eqnarray*}
\sum_{n \neq 0} | \widehat{\mu}(n) |^q (1+|n|)^{-\beta q} & \leq & C \sum_{n \neq 0} | n |^{-\alpha q/2 - \beta q} \ln(e|n|)^{-\gamma q/2}\\
& \leq & C \sum_{n \neq 0} \frac{1}{|n| \ln(e|n|)^{\gamma q/2}} < \infty
\end{eqnarray*}
with $C$ a positive constant. Hence $\mu \in A^q_{-\beta}(\T)$. We set $E = \supp(\mu)$. By lemma \ref{thcycl} the result is proved.

\end{proof}

\section{Remarks}

We say that $(\omega_n) \in \R^\Z$ is a weight if $w_n \geq 1$ and $\omega_{n+k} \leq C \omega_n \omega_k$ for all $k,n \in \Z$ and $C$ a positive constant. For $\omega$ a weight and $1 \leq p < \infty$ we set $$A^p_\omega(\T) = \left\{ f \in C(\T), \;\; 
 \|f\|_{A^p_\omega(\T)}^p = \sum_{n\in \Z} |\widehat{f}(n)|^p \omega_n^{p} < \infty \right\}.$$
Note that $\|fS\|_{A^p_\omega(\T)} \leq \|f\|_{A^1_\omega(\T)} \|S\|_{A^p_\omega(\T)}$ for $f \in A^1_\omega(\T)$ and $S \in A^p_\omega(\T)$. So we have the same result as \eqref{caraCyclNorm} to characterize cyclicity in $A^p_\omega(\T)$ by norm.

When $\omega_n=O((1+|n|)^\varepsilon)$ for all $\varepsilon > 0$, for example $\omega_n=\ln(e+|n|)^\beta$ where $\beta \geq 0$, we can show the same result as Lemma \ref{lemme2Newman}. So by noting that for all $p \geq 1$ and $\delta > 0$,
$$A^p_\delta(\T) \subset A^p_\omega(\T) \subset A^p(\T)$$
we obtain by Theorem A the following result:

\begin{theo} \label{Thlog}
Let $1 < p < 2$ and $\omega=(\omega_n)_{n\in\Z}$ a weight satisfying $\omega_n=O((1+|n|)^\varepsilon)$ for all $\varepsilon > 0$.
\begin{enumerate}
\item If $f \in A^1_\omega(\T)$ and $\dim(\cZ(f)) < \frac{2}{q}$ then $f$ is cyclic in $A^p_\omega(\T)$.
\item For $\frac{2}{q} < \alpha \leq 1$, there exists a closed subset $E \subset \T$ such that $\dim(E)=\alpha$ and every $f \in A^1_\omega(\T)$ satisfying $\cZ(f)=E$ is not cyclic in $A^p_\omega(\T)$.
\item For all $0 < \varepsilon < 1$, there exists a closed subset $E \subset \T$ such that $\dim(E)=1-\varepsilon$ and every $f \in A^1_\omega(\T)$ satisfying $\cZ(f)=E$ is cyclic in $A^p_\omega(\T)$.
\end{enumerate}
\end{theo}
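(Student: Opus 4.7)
The key idea is to transfer each conclusion of Theorem A from $A^p_\beta(\T)$ (for a suitably small $\beta>0$) to $A^p_\omega(\T)$ using the sandwich inclusions $A^p_\delta(\T)\subset A^p_\omega(\T)\subset A^p(\T)$ valid for every $\delta>0$ (both immediate from $\omega_n\geq 1$ and $\omega_n=O((1+|n|)^\delta)$), together with the dual inclusion $A^q(\T)\subset (A^p_\omega(\T))^*$ and the $A^p_\omega$-analog of Lemma \ref{lemme2Newman} asserted in the preceding remark.

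For (1), let $f\in A^1_\omega(\T)$, set $E=\cZ(f)$, and pick $\beta\in(0,1/q)$ so that $\dim E<\frac{2}{q}(1-\beta q)$. The $C^1$-function $\varphi(t)=\dist(t,E)^2$ has $\cZ(\varphi)=E$ and lies in $A^1_\beta(\T)$ by inequality \eqref{ineA1inf}; Theorem \ref{ThAA}.(1) thus makes $\varphi$ cyclic in $A^p_\beta(\T)$, and Lemma \ref{lemme2Newman} yields Lipschitz functions $g_n$ vanishing on $E$ with $\|g_n-1\|_{A^p_\beta(\T)}\to 0$. Since $\|\cdot\|_{A^p_\omega(\T)}\leq C\|\cdot\|_{A^p_\beta(\T)}$, the same convergence holds in $A^p_\omega(\T)$, and the $A^p_\omega$-analog of Lemma \ref{lemme2Newman} forces $f$ to be cyclic in $A^p_\omega(\T)$. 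For (3), given $0<\varepsilon<1$ set $k=[q/2]$, $\beta=\varepsilon/(2(k+1))$, and choose $\lambda\in(0,1)$ with $(1-\lambda)/(1+\lambda)=1-\varepsilon$; take $E=S_\lambda$. Lemma \ref{dimHauss}.(3) gives $\dim E=1-\varepsilon$ and $C_{1-\varepsilon}((k+1)\times E)=0$, so the hypotheses of Theorem \ref{ThNewman1}.(b) with $k'=k+1$ are satisfied (one checks $q/2\leq k+1\leq 1/(2\beta)$ and $\alpha=1-2(k+1)\beta=1-\varepsilon$), whence every $g\in A^1_\beta(\T)$ with $\cZ(g)=E$ is cyclic in $A^p_\beta(\T)$; the same transfer as for (1) then concludes.

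For (2), given $\alpha\in(2/q,1]$, Salem's Theorem \ref{Salem} applied with the exponent $q$ (which satisfies $q>2/\alpha$) produces a closed $E\subset\T$ of dimension $\alpha$ supporting a non-zero positive measure $\mu\in A^q(\T)$. Since $\omega_n\geq 1$, one has $\mu\in A^q(\T)\subset (A^p_\omega(\T))^*$. For any $f\in A^1_\omega(\T)$ with $\cZ(f)=E$, the pairing $\langle \mu, e_n f\rangle=\int e^{int}f\,d\mu$ vanishes for every $n$ (because $f\equiv 0$ on $\supp\mu\subset E$), so the non-zero functional $\mu$ annihilates $\Vect\{e_n f : n\in\Z\}$ and $f$ is not cyclic in $A^p_\omega(\T)$.

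The whole scheme rests on the $A^p_\omega$-analog of Lemma \ref{lemme2Newman}, asserted but not proved in the preceding remark; this amounts to a spectral synthesis statement for Lipschitz elements of the Banach algebra $A^1_\omega(\T)$. Its proof should mirror Lemma \ref{syntheseS} via the tent kernel $\Delta_h$, the only new technical input being $\sum|\widehat S(n)|^2/n^2<\infty$ for $S\in (A^1_\omega(\T))^*$, which is immediate from $|\widehat S(n)|\leq C\omega_n$ combined with $\omega_n=O((1+|n|)^{1/4})$. Making this passage rigorous is the genuine new ingredient; once in hand, the three parts are dispatched as above.
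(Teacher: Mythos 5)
Your proposal is correct and follows essentially the same route as the paper: reduce to Theorem \ref{ThAA} for a suitably small $\beta>0$ (and, for (2), to the Salem measure in $A^q(\T)$) and transfer to $A^p_\omega(\T)$ through the inclusions $A^p_\beta(\T)\subset A^p_\omega(\T)\subset A^p(\T)$ together with the $\omega$-analogue of Lemma \ref{lemme2Newman}. The only slip is inessential: $\dist(\cdot,E)^2$ is not $C^1$ (it has corners at the midpoints of the complementary arcs of $E$), but it is Lipschitz and hence lies in $A^1_\beta(\T)$ for $\beta<1/2$, and in fact the auxiliary $\varphi$ is superfluous since Theorem \ref{ThAA}.(1) already gives cyclicity of \emph{every} $f\in A^1_\beta(\T)$ with that zero set, which is precisely the hypothesis needed in Lemma \ref{lemme2Newman}.
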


\begin{proof}
$(1)$ : Let $f \in A^1_\omega(\T)$ such that $\dim(\cZ(f)) < \frac{2}{q}$. There exists $0 < \delta < 1/2$ such that $\dim(\cZ(f)) < \frac{2}{q}(1 - \delta q)$. By Theorem \ref{ThAA}.(1), every $g \in A^1_\delta(\T)$ satisfying $\cZ(g)=\cZ(f)$ is cyclic in $A^p_\delta(\T)$. Therefore by Lemma \ref{lemme2Newman}, there exist $(f_n)$ a sequence of Lipschitz functions which are zero on $\cZ(f)$ and such that
$$\lim_{n \to \infty} \|f_n-1\|_{A^p_\delta(\T)} = 0.$$
Moreover $\omega_n=O((1+|n|)^\delta)$ so $$\lim_{n \to \infty} \|f_n-1\|_{A^p_\omega(\T)} = 0.$$
Again by Lemma \ref{lemme2Newman} in $A^p_\omega(\T)$, we obtain that $f$ is cyclic in $A^p_\omega(\T)$.

$(2)$ : By the theorem of Salem (see Theorem \ref{Salem} and Theorem \ref{ThSansPoids}.(2)), there exists a closed set $E \subset \T$ such that $\dim(E)=\alpha$ and every $f \in A^1(\T)$ satisfying $\cZ(f)=E$ is not cyclic in $A^p(\T)$. Let $f \in A^1_\omega(\T)$ such that $\cZ(f)=E$ . Since $f \in A^1(\T)$, $f$ is not cyclic in $A^p(\T)$. However $\| \cdot \|_{A^p(\T)} \leq \| \cdot \|_{A^p_\omega(\T)}$ therefore $f$ is not cyclic in $A^p_\omega(\T)$.

$(3)$ : Let $0 < \varepsilon < 1$ and $\beta > 0$ such that $1-2([q/2]+1)\beta \geq 1-\varepsilon$. By Theorem \ref{ThAA}.(4), there exists a closed set $E\subset \T$ such that 
$$\dim(E) \geq  1-2([q/2]+1)\beta \geq 1-\varepsilon$$
and such that every $f \in A^1_\beta(\T)$ satisfying $\cZ(f) = E$ is cyclic in $A^p_\beta(\T)$. Since $A^p_\beta(\T) \subset A^p_\omega(\T)$, we obtain, by Lemma \ref{lemme2Newman}, that every $f \in A^1_\omega(\T)$ satisfying $\cZ(f)=E$ is cyclic in $A^p_\omega(\T)$.
\end{proof}

\bigskip
When $p > 2$ the search for cyclic vectors in $A^p(\T)$ seems extremely difficult. Newman in \cite{NEW} shows that for all $\alpha < 2\pi$ there exists $E \subset \T$ which has a Lebesgue measure $|E| > \alpha$ and such that every $f \in A^1(\T)$ satisfying $\cZ(f)=E$ is cyclic in $A^p(\T)$. See also \cite[Theorem 6]{NEW} for the existence of non cyclic functions under some conditions. We also have a characterization of the cyclic vectors in term of the zeros of the Fourier transform when $p > 2$ but it's not very effective : A function $f \in A^1(\T)$ is cyclic in $A^p(\T)$ if and only if $\cZ(f)$ does not support any non-zero function $g \in A^q(\T)$ where $q = \frac{p}{p-1}$.

When $\omega_n=\log(e+|n|)^\beta$ where $0< \beta < 1$, for all $p > \frac{2}{1-\beta}$ and for all $\alpha < 2\pi$, Nikolskii shows in \cite[Corollary 6]{NIK}, there exists $E \subset \T$ which has a Lebesgue measure $|E| > \alpha$ and such that every $f \in A^1_\beta(\T)$ satisfying $\cZ(f) = E$ is cyclic in $A^p_\beta(\T)$.

\section*{Acknowledgements}

I would like to acknowledge my doctoral advisors, K. Kellay and M. Zarrabi.


\begin{thebibliography}{14}
\bibitem{ABA} \textsc{E. Abakumov, A. Atzmon, S. Grivaux}, \emph{Cyclicity of bicyclic operators}, Comptes Rendus Math{\'e}matique Vol. 344, 447-452 (2007)
\bibitem{BEU} \textsc{A. Beurling}, \emph{On a closure problem}, Ark. Mat. 1 (1951), 301–303
\bibitem{EID} \textsc{V.Ya. Eiderman}, \emph{Capacities of Generalized Cantor Sets}, Operator Theory: Advances and Applications, Vol. 158, 131-139 (2005)
\bibitem{ZAR} \textsc{O. El-Fallah, N. K. Nikolski, M. Zarrabi}, \emph{Estimates for resolvents in Beurling-Sobolev algebras}, St. Petersburg Math. J. 10 (1999), no. 6, 901-964
\bibitem{KA} \textsc{J-P. Kahane}, \emph{Séries de Fourier absolument convergentes}, Springer-Verlag, Berlin-Heidenberg-New York, (1970)
\bibitem{KAH} \textsc{J-P. Kahane, R. Salem}, \emph{Ensembles parfaits et séries trigonométriques}, Hermann (1963)
\bibitem{KOR} \textsc{T.W. K\"orner}, \emph{On the theorem of iva\v{s}ev-musatov iii}, Proc. Lond. Math. Soc. (1986) 53(3) 143-192
\bibitem{LEV} \textsc{N. Lev, A. Olevskii}, \emph{Wiener’s ‘closure of translates’ problem
and Piatetski-Shapiro’s uniqueness phenomenon}, Ann. Math. (2) 174, No. 1, 519-541 (2011)
\bibitem{NEW2} \textsc{D. J. Newman}, \emph{Some results in spectral synthesis}, Duke Mathematical Journal, vol. 27 (1960), pp. 359--362
\bibitem{NEW} \textsc{D. J. Newman}, \emph{The closure of translates in $\ell^p$},
 Amer. J. Math. 86 (1964), pp. 651--667
\bibitem{Npams} \textsc{D.J. Newman},  \emph{A simple proof of Wiener's $1/f$ theorem}. Proc. Amer. Math. Soc. 48 (1975), 264--265. 
\bibitem{NIK} \textsc{N. K. Nikolskii}, \emph{Lectures on the shift operator IV}, Zapiski Nauchnykh Seminarov LOMI Vol 65 (1976) pp. 103-132
\bibitem{OHT} \textsc{M. Ohtsuka}, \emph{Capacité d'ensembles de Cantor généralisés}, Nagoya Math. J. 11 151-160 (1957)
\bibitem{RRS} \textsc{W. T. Ross, S. Richter, C. Sundberg}, \emph{Hyperinvariant subspaces of the harmonic Dirichlet space}, J. Reine Angew. Math. 448 (1994) 1--26.
\bibitem{SAL} \textsc{R. Salem}, \emph{On singular monotonic functions whose spectrum has a given Hausdorff dimension}, Ark. Mat., 1: 353--365, (1950)
\bibitem{WIE} \textsc{N. Wiener}, \emph{Tauberian theorems}, Ann. of Math. (2) 33 (1932), 1--100
\end{thebibliography}
\end{document}